\newcommand{\fp}{\mathfrak{p}}
\newcommand{\ri}{\mathfrak{o}}
\newcommand{\Kp}{K_\mathfrak p}
\newcommand{\Zp}{\mathfrak{o}_\mathfrak{p}}
 \DeclarePairedDelimiter\card{\lvert}{\rvert}
\newtheorem{theorem}{Theorem}
\newtheorem{lemma}[theorem]{Lemma}
\newtheorem{proposition}[theorem]{Proposition}
\newtheorem{corollary}[theorem]{Corollary}
\theoremstyle{definition}
\newtheorem{definition}[theorem]{Definition}
\theoremstyle{remark}
\newtheorem{remark}[theorem]{Remark}
\numberwithin{equation}{section}
\begin{document}
	\title[Local to global principle over number fields for higher moments]{Local to global principle over number fields for higher moments}

	\author[G. Micheli]{Giacomo Micheli}
	\address{Department of Mathematics\\
		University of South Florida\\
		Tampa, FL 33620, United States of America
	}
	\email{gmicheli@usf.edu}
	
	\author[S. Schraven]{Severin Schraven}
	\address{Department of Mathematics, University of British Columbia, 1984 Mathematics Road, Vancouver, BC V6T 1Z2, Canada
	}
	\email{sschraven@math.ubc.ca}

	\author[S. Tinani]{Simran Tinani}
	\address{Institute of Mathematics\\
		University of Zurich\\
		Winterthurerstrasse 190\\
		8057 Zurich, Switzerland
	}
	\email{simran.tinani@math.uzh.ch}

	\author[V. Weger]{Violetta Weger}
	\address{Department of Electrical and Computer Engineering\\
		Technical University of Munich\\
		Theresienstrasse 90\\
		80333 Munich, Germany
	}
	\email{violetta.weger@tum.de}

	\subjclass[2010]{}
	
	\keywords{ Densities, Number Fields, Expected Values, Variance.}
	
	\maketitle
	
	\begin{abstract}
	The local to global principle for densities is a very convenient tool proposed by Poonen and Stoll to compute the density of a given subset of the integers.
	In this paper we provide an effective criterion to find all higher moments of the density (e.g. the mean, the variance) of a subset of a finite dimensional free module over the ring of algebraic integers of a number field. More precisely, we provide a local to global principle that allows the computation of all higher moments corresponding to the density, over a general number field $K$. This work advances the understanding of local to global principles for density computations in two ways: on one hand, it extends a result of  Bright, Browning and Loughran, where they provide the local to global principle for densities over number fields; on the other hand, it extends the recent result on a  local to global principle for expected values over the integers  to both the ring of algebraic integers and to moments higher than the expected value.  
	To show how effective and applicable our method is, we compute the density, mean and variance of Eisenstein polynomials and shifted Eisenstein polynomials over number fields. This extends (and fully covers) results in the literature that were obtained with ad-hoc methods.
	\end{abstract}
	
	\section{Introduction}
%To measure the relative size of subsets is often of interest and mostly invokes the use of probabilities. Classically this is done using a uniform probability distribution. However,   to measure  subsets of the integers a different method is required, for example the density.  
 For a positive integer $d$, the density of a set $T \subseteq \mathbb{Z}^d$ is defined as 
	$$\rho(T) = \lim\limits_{H \to \infty} \frac{ \mid T \cap [-H,H[^d \mid}{ (2H)^d},$$ if it exists. Computing the density of a subset $T$ of $\mathbb Z^n$ is a classical problem in number theory, as it provides an estimate on the relative size of $T$ and $\mathbb Z^n$

	A well-known result in this area is the density of coprime pairs, which has been computed by Mertens \cite{mertens1874ueber} and C\'esaro \cite{ce1,ce3} independently. This result has been generalized to coprime $m$-tuples by Nymann \cite{bib:nymann1972probability} and further to rectangular unimodular matrices by Micheli and Weger \cite{densitiesunimodular}.  Another interesting target set is the set of Eisenstein polynomials, where the results are due to Dubickas \cite{bib:PolyDub} in the case of monic polynomials and due to Heymann and Shparlinski \cite{bib:shparlinskiEisen} in the non-monic case. 
	In addition, Micheli and Schnyder computed the density of shifted and affine Eisenstein polynomials over $\mathbb Z$ in \cite{micheli2016densityeis}.
	Since the density can also be defined over a general number field, further generalizations of these results appeared (see \cite{bib:MicheliCesaro, densitiesunimodular}), using a pull-back function from the set of interest, i.e., the ring of algebraic integers to the integers.
	
An elegant tool to compute certain densities over the integers is  called the local to global principle. This principle was introduced by Poonen and Stoll \cite{poonenAnn} and uses a local characterization of the target set, i.e., over the $p$-adic integers.
	In fact, all of the above mentioned examples have a density that can be computed through this tool. The local to global principle has been generalized to number fields by Bright \emph{et al.} in \cite{bright2016failures} and to function fields by Micheli in \cite{functionfields}. 
	
	Recently, a new question regarding densities has arisen in \cite{primenumbereisenstein}: to compute the expected value and the variance of Eisenstein polynomials over the integers. More precisely, the question can be formulated as \emph{``On an average, for a random Eisenstein polynomial how many primes $p$ are such that this Eisenstein polynomial satisfies the criterion of Eisenstein for $p$?"} 
	This led to \cite{ltgm}, where a general definition of the expected value corresponding to the density is given and an addendum to the local to global principle over the integers is provided, which allows to compute these expected values corresponding to the density directly using the local characterization of the target set. In addition, in the thesis \cite{weger} the variance of Eisenstein polynomials and other target sets over the integers were determined using an adaption of this new local to global principle.
	
	This paper finally closes all the missing gaps in this historical overview, as we provide a final extension of the local to global principle over number fields, which allows to compute all higher moments of a target set. As an application of this result, we compute the missing density of Eisenstein polynomials over number fields, in addition to their mean and variance. 
	
	The paper is organized as follows: in Section \ref{sec:preliminaries} we recall the definition of density over the integers as well as over number fields  and restate the original local to global principle by Poonen and Stoll and its generalization to number fields by Bright \emph{et al.}
	In Section \ref{sec:highermoments} we give the definition of the expected value and higher moments of a system  and  present the main result, Theorem \ref{Thm:highermoments}, an extension of the local to global principle, which works over a general number field and allows to compute any higher moment. Finally, we compute the density, the mean and the variance of Eisenstein polynomials over number fields in Section \ref{sec:applications}.

	\section{Preliminaries}\label{sec:preliminaries}
For a positive integer $d$, the density of a set $T \subseteq \mathbb{Z}^d$ is defined by restricting to a $d$-dimensional cube of height $H$ and letting $H$ go to infinity.
	
	\begin{definition}[Density]
	    Let $d \in \mathbb{N}.$ The density of $T \subseteq \mathbb{Z}^d$ is defined to be 
	   $$\rho(T) = \lim\limits_{H \to \infty} \frac{ \mid T \cap [-H,H[^d \mid}{ (2H)^d},$$ if the limit exists.
	\end{definition}
	Similarly, one can define the upper density $\overline{\rho}$  and the lower density $\underline{\rho}$ using the limit superior and the limit inferior, respectively.

There exist various tools to compute the density of a set, one of the main tools is the local to global principle \cite{poonenAnn, bib:loctoglob} by Poonen and Stoll, which allows to compute  the density of certain sets by characterizing these sets   over the $p$-adic integers. 

	To present the principle we need to introduce the following notation.
For a set $S$ we denote by $2^S$ its powerset and by $S^C$ its complement. In addition, if $S$ is a subset of a metric space we denote by $\partial(S)$ its boundary. We denote by $\mathcal{P}$ the natural primes and for $p \in \mathcal{P}$ we denote by $\mathbb{Z}_p$ the $p$-adic integers. Let $M_{\mathbb{Q}} = \{ \infty \} \cup \mathcal{P}$ be the set of all places of $\mathbb{Q},$ where we denote by $\infty$ the unique Archimedean place of $\mathbb{Q}.$  Finally, for $d \in \mathbb{N},$ we denote by $\mu_\infty$ the Lebesgue measure on $\mathbb{R}^d$ and by $\mu_p$ the normalized Haar measure on $\mathbb{Z}_p^d.$

	\begin{theorem}[\text{\cite[Lemma 1]{bib:loctoglob}}]\label{Thm:original_poonen} \label{PoonenStoll}
		Let $d$ be a positive integer. Let $U_{\infty} \subseteq \mathbb{R}^d$, such that $\mathbb{R}_{\geq0} \cdot U_{\infty} = U_{\infty}$ and $\mu_{\infty}(\partial(U_{\infty}))=0.$ Let $s_{\infty}= \frac{1}{2^d}\mu_{\infty}(U_{\infty} \cap [-1,1]^d)$. 
		For each prime $p$, let $U_p \subseteq \mathbb{Z}_p^d$ be such that $\mu_p(\partial(U_p)) =0$ and define $s_p = \mu_p(U_p)$. 
		Define the following map 
		\begin{eqnarray*}
			P: \mathbb{Z}^d   &\rightarrow &  2^{M_{\mathbb{Q}}}, \\
			a  &\mapsto & \left\{ \nu \in M_{\mathbb{Q}} \mid a \in U_{\nu} \right\}.
		\end{eqnarray*}
		If the following is satisfied:
		\begin{equation}\label{originalcond}
		\lim_{M \rightarrow \infty} \bar{\rho}\left( \left\{ a \in \mathbb{Z}^d \mid a \in U_p \  \text{for some prime} \ p > M \right\} \right)=0,
		\end{equation}
		then:
		\begin{itemize}
			\item[i)] $\sum\limits_{\nu \in M_{\mathbb{Q}}} s_{\nu}$ converges.
			\item[ii)] For $\mathcal{S} \subseteq 2^{M_{\mathbb{Q}}},$  $\rho(P^{-1}(\mathcal{S}))$ exists, and defines a measure on $2^{M_{\mathbb{Q}}}$.
			\item[iii)] For each finite set $S \in 2^{M_{\mathbb{Q}}}$, we have that
			\begin{equation*}
			\rho(P^{-1}(\{S\})) = \prod_{\nu \in S} s_{\nu} \prod_{\nu \not\in S} (1-s_{\nu}), 
			\end{equation*}
			and if $\mathcal{S}$ consists of infinite subsets of $2^{M_{\mathbb{Q}}}$, then $\rho(P^{-1}(\mathcal{S}))=0.$
		\end{itemize}
	\end{theorem}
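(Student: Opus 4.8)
The strategy is to first establish a \emph{finite truncation} of the statement and then use hypothesis \eqref{originalcond} to pass to the limit. For a finite set $T\subseteq M_{\mathbb Q}$ and $S\subseteq T$ put $A_{T,S}=\{a\in\mathbb Z^d:\{\nu\in T:a\in U_\nu\}=S\}$. The first step is the finite version
\[
\rho(A_{T,S})=\prod_{\nu\in S}s_\nu\prod_{\nu\in T\setminus S}(1-s_\nu).
\]
For the non-archimedean places in $T$ this is a Chinese Remainder Theorem computation: for a single prime $p$, since $\mu_p(\partial U_p)=0$ one squeezes $\{a\in\mathbb Z^d:a\in U_p\}$ between the integers whose reduction mod $p^n$ lies in the union of cosets of $p^n\mathbb Z_p^d$ contained in, resp. meeting, $U_p$, the two cylinder sets having $\mu_p$-measures differing by $o(1)$; integers in $[-H,H[^d$ equidistribute among residues mod $p^n$, and finitely many primes are combined by CRT. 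For $\infty$ one uses that $U_\infty$ is a cone, so $U_\infty\cap[-H,H[^d=H\cdot(U_\infty\cap[-1,1[^d)$, together with $\mu_\infty(\partial U_\infty)=0$, to get $\#(\mathbb Z^d\cap U_\infty\cap[-H,H[^d)=(2H)^d s_\infty+o(H^d)$ from the standard lattice-point count for Jordan-measurable sets; the archimedean and $p$-adic conditions decouple by the same equidistribution argument, now for the translated sublattices $r+\big(\prod_{p\in T\cap\mathcal P}p^n\big)\mathbb Z^d$ inside the dilated cone. This step uses all three hypotheses on the $U_\nu$ and is the analytic core, but it is classical.

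Write $T_M=\{\infty\}\cup\{p\in\mathcal P:p\le M\}$ and, by \eqref{originalcond}, let $\varepsilon_M=\bar\rho(\{a:a\in U_p\text{ for some prime }p>M\})\to0$. Applying the finite version with $T=\{p\in\mathcal P:M<p\le N\}$ and $S=\emptyset$ gives $\rho(\{a:a\notin U_p\ \forall\,M<p\le N\})=\prod_{M<p\le N}(1-s_p)$, hence $1-\prod_{M<p\le N}(1-s_p)\le\varepsilon_M$ for every $N$. In particular $s_p\le\varepsilon_M$ for all primes $p>M$, and the decreasing sequence of partial products stays $\ge 1-\varepsilon_M>0$ for $M$ large, so $\prod_{p>M}(1-s_p)$ converges to a nonzero limit; for $0\le s_p<1$ this is equivalent to $\sum_{p>M}s_p<\infty$, and adding the finitely many remaining terms together with $s_\infty$ gives part i).

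For part iii) with $S$ finite, fix $M$ with $S\subseteq T_M$. Then $P^{-1}(\{S\})\subseteq A_{T_M,S}$ and $A_{T_M,S}\setminus P^{-1}(\{S\})\subseteq\{a:a\in U_p\text{ for some }p>M\}$, so by the elementary sub/super-additivity of $\underline\rho$ and $\bar\rho$,
\[
\rho(A_{T_M,S})-\varepsilon_M\le\underline\rho(P^{-1}(\{S\}))\le\bar\rho(P^{-1}(\{S\}))\le\rho(A_{T_M,S}).
\]
By the finite version $\rho(A_{T_M,S})=\prod_{\nu\in S}s_\nu\prod_{\nu\in T_M\setminus S}(1-s_\nu)$, which, as $M\to\infty$, converges to $\prod_{\nu\in S}s_\nu\prod_{\nu\notin S}(1-s_\nu)$ by part i), while $\varepsilon_M\to0$; the squeeze yields the formula. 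If $\mathcal S$ consists of infinite subsets of $M_{\mathbb Q}$, then $P^{-1}(\mathcal S)\subseteq\{a:a\in U_p\text{ for some }p>M\}$ for every $M$ (an infinite $P(a)$ contains a prime $>M$), so $\bar\rho(P^{-1}(\mathcal S))\le\varepsilon_M\to0$, proving the rest of part iii). In particular $\{a:P(a)\text{ infinite}\}$ has density $0$, hence $\{a:P(a)\text{ finite}\}$ has density $1$.

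For part ii), decompose $\mathcal S=\mathcal S_{\mathrm{fin}}\sqcup\mathcal S_{\mathrm{inf}}$ according to whether its elements are finite or infinite subsets of $M_{\mathbb Q}$; the family $\mathcal F$ of all finite subsets of $M_{\mathbb Q}$ is countable, so fix an enumeration $\mathcal F=\{S_1,S_2,\dots\}$ and put $\mathcal F_n=\{S_1,\dots,S_n\}$. Since $P^{-1}(\mathcal F_n)$ is a finite disjoint union, its density is $\sum_{i\le n}\rho(P^{-1}(\{S_i\}))$, and $P^{-1}(\mathcal F\setminus\mathcal F_n)=\{a:P(a)\text{ finite}\}\setminus P^{-1}(\mathcal F_n)$ therefore has density $1-\sum_{i\le n}\rho(P^{-1}(\{S_i\}))$; on the other hand, for each $M$, once $n$ is large enough that $\mathcal F_n$ contains every subset of $T_M$, any $a\in P^{-1}(\mathcal F\setminus\mathcal F_n)$ has $P(a)\not\subseteq T_M$, whence $P^{-1}(\mathcal F\setminus\mathcal F_n)\subseteq\{a:a\in U_p\text{ for some }p>M\}$ and its density is $\le\varepsilon_M$; letting first $n\to\infty$ and then $M\to\infty$ gives $\sum_{i\ge1}\rho(P^{-1}(\{S_i\}))=1$. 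Finally, for arbitrary $\mathcal S$, with $\mathcal S_n=\mathcal S_{\mathrm{fin}}\cap\mathcal F_n$ one has $P^{-1}(\mathcal S_n)\subseteq P^{-1}(\mathcal S)\subseteq P^{-1}(\mathcal S_n)\cup P^{-1}(\mathcal F\setminus\mathcal F_n)\cup P^{-1}(\mathcal S_{\mathrm{inf}})$, which after taking $\underline\rho$, $\bar\rho$ and using the two previous paragraphs yields
\[
\sum_{S\in\mathcal S_n}\rho(P^{-1}(\{S\}))\le\underline\rho(P^{-1}(\mathcal S))\le\bar\rho(P^{-1}(\mathcal S))\le\sum_{S\in\mathcal S_n}\rho(P^{-1}(\{S\}))+\varepsilon_M
\]
for $n$ large; letting $n\to\infty$ and then $M\to\infty$ shows that $\rho(P^{-1}(\mathcal S))$ exists and equals $\sum_{S\in\mathcal S_{\mathrm{fin}}}\rho(P^{-1}(\{S\}))$, a set function visibly countably additive in $\mathcal S$. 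The point to watch — and the main obstacle — is precisely this last passage: densities are only finitely additive, so one cannot sum over the atoms $\{S\}$ for free, and it is exactly the uniform control of the tail $P^{-1}(\mathcal F\setminus\mathcal F_n)$ provided by \eqref{originalcond} that forces countable additivity to hold.
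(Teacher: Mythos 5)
The paper does not prove this statement: it is quoted verbatim from Poonen--Stoll (\cite[Lemma~1]{bib:loctoglob}) and used as a black box. Your reconstruction is correct and follows the same route as the original source -- a finite truncation $A_{T,S}$ computed by equidistribution/CRT at the finite places and a cone--lattice-point count at $\infty$, followed by the squeeze via Condition \eqref{originalcond} to control the tail, which is exactly what forces convergence in i), the product formula in iii), and the countable additivity in ii); the only step left as ``classical'' (the finite truncation itself, using $\mu_\nu(\partial U_\nu)=0$) is indeed standard and fills in without difficulty.
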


	To show that Condition \eqref{originalcond} is satisfied, one can often apply the following lemma.
	
	\begin{lemma}[ \text{\cite[Lemma 2]{poonenAnn}}]\label{lem:original}
	  Let $d$ and $M$ be positive integers. Let $f,g \in \mathbb{Z}[x_1, \ldots, x_d]$ be relatively prime and define
	  $$S_M(f,g) = \{ a \in \mathbb{Z}^d \mid f(a) \equiv  g(a) \equiv 0 \mod p \ \text{for some prime} \ p >M\}.$$ Then
	  $$\lim\limits_{M \to \infty} \overline{\rho}(S_M(f,g)) =0.$$
	\end{lemma}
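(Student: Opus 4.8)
The plan is to convert the condition defining $S_M(f,g)$ — divisibility of a \emph{pair} of values by a common prime — into a divisibility condition for a \emph{single} polynomial via a resultant, and then to estimate the resulting sum over primes by splitting it at the scale $p\approx 2H$ of the counting box. Note that relative primality is genuinely needed: for $f=g=x_1$ the set $S_M(f,g)$ has upper density $1$ for every $M$, so some hypothesis ruling this out is unavoidable.

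First I would dispose of the degenerate cases. If one of $f,g$ is a nonzero constant $c$, then $p\mid f(a)$ forces $p\mid c$, so $S_M(f,g)=\emptyset$ as soon as $M>\lvert c\rvert$; and if one of them is $0$, relative primality forces the other to be a unit, whence again $S_M(f,g)=\emptyset$. So assume $f,g$ are both nonconstant. I would then choose a linear change of variables $\phi\in\mathrm{GL}_d(\mathbb Z)$ with $\phi(e_d)$ a primitive vector at which neither top-degree homogeneous form $f^{\mathrm{top}}$ nor $g^{\mathrm{top}}$ vanishes; after replacing $(f,g)$ by $(f\circ\phi,g\circ\phi)$ — which changes $\overline\rho(S_M(f,g))$ only by a bounded multiplicative factor, since $\phi$ distorts the boxes $[-H,H[^d$ boundedly, and which preserves relative primality — the polynomials $f,g$ become monic in $x_d$ up to a nonzero integer constant, with $\deg_{x_d}f,\deg_{x_d}g\ge 1$. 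By Gauss's lemma they are then relatively prime in $\mathbb Q(x_1,\dots,x_{d-1})[x_d]$, so $\rho:=\mathrm{Res}_{x_d}(f,g)$ is a \emph{nonzero} element of $\mathbb Z[x_1,\dots,x_{d-1}]$, and $\rho=Af+Bg$ for some $A,B\in\mathbb Z[x_1,\dots,x_d]$. Let $\mathcal B$ be the finite set of primes dividing the leading $x_d$-coefficients of $f$ or $g$ or the content of $\rho$; it suffices to treat $M\ge\max\mathcal B$.

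For the core estimate: if $a\in S_M(f,g)\cap[-H,H[^d$ through a prime $p>M$, then $p\mid\rho(a_1,\dots,a_{d-1})$ because $\rho\in(f,g)$. The points $a$ with $\rho(a_1,\dots,a_{d-1})=0$ lie over the proper subvariety $\{\rho=0\}$ of $[-H,H[^{d-1}$, so — letting the last coordinate range freely — they number $O(H^{d-1})$ and are negligible after division by $(2H)^d$. For the rest, $p\mid\rho(a_1,\dots,a_{d-1})\neq 0$; since $p\notin\mathcal B$, the reduction mod $p$ of $f(a_1,\dots,a_{d-1},x_d)$ retains degree $\deg_{x_d}f$, hence has at most $\deg_{x_d}f$ roots, so $a_d$ lies in at most $\deg_{x_d}f$ residue classes mod $p$, each contributing at most $\lceil 2H/p\rceil\le 2H/p+1$ integers in $[-H,H[$. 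Summing over the base point and over $p$ gives
\[
\#\bigl(S_M(f,g)\cap[-H,H[^d\bigr)\ \le\ O(H^{d-1})\ +\ (\deg_{x_d}f)\sum_{p>M}\Bigl(\tfrac{2H}{p}+1\Bigr)N(p,H),
\]
where $N(p,H):=\#\{\,b\in[-H,H[^{d-1}:\ p\mid\rho(b)\neq 0\,\}$.

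The step I expect to be the main obstacle is bounding this prime sum, and I would split it at $p=2H$. For $M<p\le 2H$, the Schwartz--Zippel bound yields $N(p,H)\le(\deg\rho)p^{d-2}\lceil 2H/p\rceil^{d-1}\ll_{\rho} H^{d-1}/p$, so these primes contribute $\ll_{f,\rho}H^{d}\sum_{p>M}p^{-2}\ll H^{d}/M$. For $p>2H$ a term-by-term estimate is too weak, so instead I would interchange the order of summation,
\[
\sum_{p>2H}\Bigl(\tfrac{2H}{p}+1\Bigr)N(p,H)\ \le\ 2\!\!\sum_{\substack{b\in[-H,H[^{d-1}\\ \rho(b)\neq 0}}\#\{\,p>2H:\ p\mid\rho(b)\,\},
\]
and use that $\lvert\rho(b)\rvert\le C_\rho H^{\deg\rho}$ on the box, so each nonzero $\rho(b)$ has at most $\deg\rho+1$ prime factors exceeding $2H$ once $H$ is large; hence the $p>2H$ part is $O(H^{d-1})$, again negligible. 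Combining the two ranges gives $\#(S_M(f,g)\cap[-H,H[^d)\le C_1H^{d-1}+C_2H^d/M$ for $H$ large, so $\overline\rho(S_M(f,g))\le C_2'/M\to 0$ as $M\to\infty$. The two delicate points are the normalization that removes bad primes from the leading coefficients, and the $p>2H$ regime, where one bounds the whole sum at once using that a bounded integer has boundedly many large prime divisors; relative primality of $f$ and $g$ enters exactly in guaranteeing $\rho\neq 0$.
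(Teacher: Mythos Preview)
The paper does not supply its own proof of this lemma; it simply quotes it from Poonen--Stoll. Your argument is correct and is essentially the original argument: after a $\mathrm{GL}_d(\mathbb Z)$ change of variables making the leading $x_d$-coefficients nonzero constants, eliminate $x_d$ via the resultant $\rho=\mathrm{Res}_{x_d}(f,g)\in\mathbb Z[x_1,\dots,x_{d-1}]\setminus\{0\}$, use $\rho\in(f,g)$ to reduce membership in $S_M$ to $p\mid\rho(a_1,\dots,a_{d-1})$, and split the prime sum at $p\approx 2H$, handling $M<p\le 2H$ by a Schwartz--Zippel count modulo $p$ and $p>2H$ by the fact that a nonzero integer of size $O(H^{\deg\rho})$ has $O(1)$ prime factors exceeding $2H$. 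The treatment of the degenerate cases, the exclusion of the finite set $\mathcal B$ of bad primes, and the final combination into an $O(H^{d-1})+O(H^d/M)$ bound are all in order.
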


The notion of density, as well as the local to global principle can be generalized to number fields. For this we will first recall some basics. 

	Let $K$ be a number field and denote by $\ri$ its ring of integers. For any non-zero prime ideal $\mathfrak{p}$ of $\ri$ we denote the completion of $K$ with respect to the non-Archimedean place $\mathfrak{p}$ by $\Kp$. Furthermore, we denote by $\Zp$ the ring of integers of $\Kp$. 
	Note that $\Zp$ is compact in the subspace topology of $\Kp$. Hence, there exists a unique normalized Haar measure on $\Zp$, which we denote by $\nu_\mathfrak{p}$. We define the degree of the number field $K$ to be the index $k=[K: \mathbb{Q}]$, which is finite by assumption. Thus, one has that $\ri$ is isomorphic to $\mathbb{Z}^k$ as a $\mathbb{Z}$-module. Let $\mathbb{E}=\{ e_1, \dots, e_k \}$ be an integral basis for $\ri$, then we can define for $H\in \mathbb{N}$ the following set
	\begin{equation*}
	O(H,\mathbb{E}) = \left\{ \sum_{j=1}^k a_j e_j \in \ri \ : \ a_j \in [-H, H[ \cap \mathbb{Z} \right\}.
	\end{equation*}
	Let $d\in \mathbb{N}$ and $ T \subseteq \ri^d$, then we define the upper, respectively the lower density on the number field $K$ as
	\begin{align*}
	\overline{\rho}_\mathbb{E}(T) = \limsup_{H\rightarrow \infty} \frac{\vert T \cap O(H, \mathbb{E})^d \vert}{(2H)^{dk}}
	\end{align*}
	and
		\begin{align*}
	 \underline{\rho}_\mathbb{E}(T) = \liminf_{H\rightarrow \infty} \frac{\vert T \cap  O(H, \mathbb{E})^d  \vert}{(2H)^{dk}},
	\end{align*}
	respectively.
	If $\overline{\rho}_\mathbb{E}(T) = \underline{\rho}_\mathbb{E}(T)$,  we also define the density of $T$ with respect to the basis $\mathbb{E}$ as
	\begin{align*}
	\rho_\mathbb{E}(T) = \lim_{H\rightarrow \infty} \frac{\vert T \cap O(H, \mathbb{E})^d  \vert}{(2H)^{dk}}.
	\end{align*}
	Furthermore, in the case where $\rho_\mathbb{E}(T)$ is independent of the integral basis $\mathbb{E}$,  we simply write $\rho(T)$.
	For a non-zero prime ideal $\mathfrak{p} \subset \ri$, we say that $\mathfrak{p}$ is lying above $p$ for some $p \in \mathcal{P}$, if  $\mathfrak{p} \cap \mathbb{Z}= p \mathbb{Z}$ and we write $\mathfrak{p} \vert p$.
	Finally, we denote by $\mathcal{P}_K$ all non-zero prime ideals in $\ri$. 
	
	In \cite{bright2016failures}, Bright \emph{et al.} provide a generalization of the original local to global principle to number fields. We will state the result translated to our setting.  Let us define the $\mathbb{R}$-algebra $k_\infty =\ri \otimes_\mathbb{Z} \mathbb{R}$, which has covolume $\card{\Delta_K}^{1/2}$, where $\Delta_K$ denotes the discriminant of $K$ and $k_\infty$ is endowed with the Haar measure $\nu_{\infty}$, see \cite[Proposition I.5.2]{neu}.
	That is, for some set $X \subseteq k_\infty$ we have
	$$\nu_\infty(X)=  \frac{1}{\card{ \Delta_K}^{1/2}} \mu_\infty (f(X)),$$ where $\mu_\infty$ denotes the Lebesgue measure and  $f$ is the isomorphism $ K \otimes_\mathbb{Q} \mathbb{R} \to \mathbb{R}^{k}$ from \cite[Proposition I.5.1]{neu}.
	Let us denote $\mathcal{P}_K \cup \{ \infty \} $ by $N_K$. 
	
	\begin{theorem}[ \text{\cite[Proposition 3.2]{bright2016failures} }] \label{PoonenStollNumbfields}
	Let $K$ be a number field and $k = [K: \mathbb{Q}].$ Let $\mathbb{E}$ be an integral basis of $\ri$  and let $d$ be a positive integer.
		 
		For each $\mathfrak{p} \in \mathcal{P}_K$, let $U_\mathfrak{p} \subseteq \Zp^d$ be such that $\nu_\mathfrak{p}(\partial( U_\mathfrak{p})) = 0$ and define $s_\mathfrak{p}= \nu_\mathfrak{p}(U_\mathfrak{p}).$  Let $U_\infty \subseteq k_\infty^d$, such that $\mu_\infty(\partial(U_\infty))=0$ and $\mathbb{R}_{\geq 0} \cdot U_\infty  = U_\infty$. We define $s_\infty= \frac{1}{2^{dk}}\nu_\infty\left(U_\infty \cap [-1,1] \cdot O(1, \mathbb{E})^d \right)$ 
		Define the following map 
		\begin{align*}
		P: \ri^d & \rightarrow 2^{N_K}, \\ a & \mapsto \{ \eta \in N_K \ \mid \ a \in U_\eta\}.
		\end{align*}
		If the following is satisfied:
		\begin{equation} \label{tail}
		\lim_{M \rightarrow \infty} \overline{\rho}_\mathbb{E} \left( \ri^d \cap \bigcup_{p>M} \bigcup_{\mathfrak{p}\in \mathcal{P}_K, \  \mathfrak{p} \vert p} U_\mathfrak{p} \right) = 0,
		\end{equation}
		then:
		\begin{enumerate}
			\item $\sum\limits_{\eta \in N_K} s_\eta$ converges.\\
			\item For $\mathcal{S}\subseteq 2^{N_K}$, $m(\mathcal{S}) \coloneqq \rho_\mathbb{E}(P^{-1}(\mathcal{S}))$  exists and  defines a measure on $2^{N_K}$. \\
			\item The measure $m$ is concentrated at the finite subsets of $N_K$. For each  finite set $S \in 2^{N_K}$, we have that
			\begin{equation} \label{prod formula}
			m(\{ S\}) = \prod_{\eta \in S} s_\eta \prod_{\eta \notin S} (1-s_\eta)
			\end{equation}
			and if $\mathcal{S}$ consists of infinite subsets of $2^{N_K}$ then $m(\mathcal{S})=0$.
		\end{enumerate}
	\end{theorem}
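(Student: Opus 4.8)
The plan is to deduce the statement from Theorem~\ref{PoonenStoll} by transporting everything to $\mathbb{Z}^{dk}$ via the integral basis $\mathbb{E}$. Fixing $\mathbb{E}$ identifies $\ri$ with $\mathbb{Z}^{k}$ as $\mathbb{Z}$-modules in such a way that $O(H,\mathbb{E})$ becomes the cube $[-H,H[^{k}$; hence $\ri^{d}\cong\mathbb{Z}^{dk}$ carries $O(H,\mathbb{E})^{d}$ to $[-H,H[^{dk}$ and, straight from the definitions, $\rho_\mathbb{E}(T)$ equals the ordinary density in dimension $dk$ of the image of $T$. The same isomorphism extends to the completions. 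For each rational prime $p$ one has $\ri\otimes_{\mathbb{Z}}\mathbb{Z}_p\cong\prod_{\mathfrak p\mid p}\Zp$ as topological groups, the left-hand side being $\mathbb{Z}_p^{k}$ via $\mathbb{E}$; since both sides carry total mass one, this identifies the normalized Haar measure $\mu_p$ on $\mathbb{Z}_p^{k}$ with $\prod_{\mathfrak p\mid p}\nu_{\mathfrak p}$. For the archimedean place, $\mathbb{E}$ identifies $k_\infty=\ri\otimes_{\mathbb{Z}}\mathbb{R}$ with $\mathbb{R}^{k}$, under which $\ri$ becomes $\mathbb{Z}^{k}$ and, because $\nu_\infty$ was normalized so as to give $\ri$ covolume one, $\nu_\infty$ becomes the Lebesgue measure $\mu_\infty$ while the box $[-1,1]\cdot O(1,\mathbb{E})^{d}$ becomes $[-1,1]^{dk}$. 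In particular the quantity $s_\infty$ in the statement is exactly the $s_\infty$ that Theorem~\ref{PoonenStoll} attaches in dimension $dk$ to the image of $U_\infty$, and the cone condition $\mathbb{R}_{\geq 0}\cdot U_\infty=U_\infty$ as well as the boundary conditions are preserved by these linear identifications.

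There is one point at which this is not literally an instance of Theorem~\ref{PoonenStoll}: the latter allows a single test set $U_p$ per rational prime, whereas here several primes $\mathfrak p$ may lie above the same $p$, and the map $P$ must remember membership in each $U_{\mathfrak p}$ separately. The first step is therefore to record the (routine) strengthening of Theorem~\ref{PoonenStoll} in which each rational prime $p$ carries a finite family $(U_{p,i})_{i\in I_p}$ of subsets of $\mathbb{Z}_p^{d'}$ with $\mu_p(\partial U_{p,i})=0$, with $P$ landing in $2^{\{\infty\}\sqcup\bigsqcup_p I_p}$. The only place the original argument uses ``one set per prime'' is the local computation at a fixed prime, and there finitely many sets with negligible boundary are handled simultaneously by the very same argument, since for a fixed $p$ the joint law modulo $p^{n}$ (as $H\to\infty$) of the reductions of the cube $[-H,H[^{d'}$ is asymptotically uniform. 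One then applies this with $d'=dk$, with $I_p=\{\mathfrak p\in\mathcal P_K:\mathfrak p\mid p\}$, and with $U_{p,\mathfrak p}$ the preimage of $U_{\mathfrak p}$ under the projection $\mathbb{Z}_p^{dk}\cong\bigl(\prod_{\mathfrak q\mid p}\mathfrak o_{\mathfrak q}\bigr)^{d}\to\mathfrak o_{\mathfrak p}^{d}$.

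It then remains to match hypotheses and conclusions. The tail hypothesis of the strengthened theorem is precisely \eqref{tail}: under the identification, $\{a\in\mathbb{Z}^{dk}:a\in U_{p,\mathfrak p}\ \text{for some}\ \mathfrak p\mid p,\ p>M\}$ is the set $\ri^{d}\cap\bigcup_{p>M}\bigcup_{\mathfrak p\mid p}U_{\mathfrak p}$, so \eqref{tail} gives exactly the vanishing of the limit of its upper density. The boundary and cone hypotheses are assumed in the statement and, by the first paragraph, survive the transport. The index set $\{\infty\}\sqcup\bigsqcup_p I_p$ of the strengthened theorem is literally $N_K$ and its output map is literally $P$; hence its conclusion~(i) becomes convergence of $\sum_{\eta\in N_K}s_\eta$ (using $\mu_p(U_{p,\mathfrak p})=\nu_{\mathfrak p}(U_{\mathfrak p})=s_{\mathfrak p}$ together with the archimedean identification), its conclusion~(ii) becomes the statement that $\mathcal S\mapsto\rho_\mathbb{E}(P^{-1}(\mathcal S))$ is a measure, and its conclusion~(iii) gives the product formula \eqref{prod formula} and concentration on the finite subsets of $N_K$ --- the latter because every infinite subset of $N_K$ meets infinitely many of the finite sets $I_p$, and $\sum_\eta s_\eta<\infty$ forces, by Borel--Cantelli, almost every $a$ to lie in only finitely many $U_\eta$.

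I expect the genuine work to be two bookkeeping points rather than a single deep obstacle. First, pinning down the measure identifications of the first paragraph precisely enough that the normalizing constants defining $s_\infty$ come out right; this is where the covolume $\card{\Delta_K}^{1/2}$ and the choice of fundamental box $O(1,\mathbb{E})^{d}$ have to conspire, and it amounts to the observation that the linear map $\mathbb{R}^{k}\to\mathbb{R}^{k}$ comparing the basis $\mathbb{E}$ with the embedding $f$ of \cite[Proposition I.5.1]{neu} has determinant of absolute value $\card{\Delta_K}^{1/2}$. Second, writing out the strengthening of Theorem~\ref{PoonenStoll} to finitely many negligible-boundary sets per prime carefully enough to be used as a black box; this is conceptually immediate but is the one place where one must reopen the Poonen--Stoll argument rather than merely cite it, and equivalently one could re-run that argument directly over $\ri$, the only new inputs then being the equidistribution of $O(H,\mathbb{E})^{d}$ modulo ideals of $\ri$ and in $k_\infty^{d}$.
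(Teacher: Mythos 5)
The paper does not actually prove this theorem: it is quoted (``translated to our setting'') from \cite[Proposition 3.2]{bright2016failures}, whose own argument runs the Poonen--Stoll machinery directly over $\ri$ with boxes $H\Omega_\infty\cap\ri^d$. Your route --- transporting everything to $\mathbb{Z}^{dk}$ via the basis $\mathbb{E}$ and invoking Theorem~\ref{PoonenStoll} --- is therefore necessarily different from the source, and it is a reasonable one: the identification of $\rho_\mathbb{E}$ with the ordinary density in dimension $dk$ is immediate from the definitions, the splitting $\ri\otimes_{\mathbb{Z}}\mathbb{Z}_p\cong\prod_{\mathfrak p\mid p}\Zp$ matches the Haar measures correctly, and your accounting of the archimedean normalization (the change of basis between $\mathbb{E}$-coordinates and the embedding $f$ has determinant $\pm\card{\Delta_K}^{1/2}$, cancelling the $\card{\Delta_K}^{-1/2}$ in the definition of $\nu_\infty$) is exactly what makes $s_\infty$ come out right. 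What the reduction buys is that one never has to redo lattice-point counting over $\ri$; what it costs is that Theorem~\ref{PoonenStoll} cannot be used as a literal black box, which brings me to the one real gap.

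Your ``routine strengthening'' of Theorem~\ref{PoonenStoll} --- a finite family $(U_{p,i})_{i\in I_p}$ of negligible-boundary subsets of $\mathbb{Z}_p^{d'}$ per prime, with conclusion the product formula over the index set $\{\infty\}\sqcup\bigsqcup_p I_p$ --- is \emph{false} as stated. Take $U_{p,1}=U_{p,2}$ with $0<\mu_p(U_{p,1})<1$: then $P^{-1}(\{S\})=\emptyset$ for $S=\{(p,1)\}$, while the product formula predicts a positive density. The uniform equidistribution of the cube modulo $p^n$ that you appeal to only yields $\mu_p$ of the relevant Boolean combination $\bigcap_{i\in A}U_{p,i}\cap\bigcap_{j\in B}U_{p,j}^{C}$ at each prime; this factors as $\prod_{i\in A}s_{p,i}\prod_{j\in B}(1-s_{p,j})$ only when the sets at a common $p$ are jointly $\mu_p$-independent. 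That independence does hold in your application --- the sets $U_{p,\mathfrak p}$ are pulled back along the distinct factors of $\bigl(\prod_{\mathfrak q\mid p}\mathfrak{o}_{\mathfrak q}\bigr)^{d}$, and the Haar measure on the product is the product of the $\nu_{\mathfrak q}$, so
\begin{equation*}
\mu_p\Bigl(\bigcap_{\mathfrak p\in A}U_{p,\mathfrak p}\cap\bigcap_{\mathfrak q\in B}U_{p,\mathfrak q}^{C}\Bigr)=\prod_{\mathfrak p\in A}s_{\mathfrak p}\prod_{\mathfrak q\in B}(1-s_{\mathfrak q})
\end{equation*}
--- but this is an essential hypothesis that must be built into the strengthened lemma (or the lemma must be stated with the measure of the Boolean combination at each prime, and the factorization verified separately in the application). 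You have assembled the product decomposition in your first paragraph but never invoke it at the one place where it is indispensable. With that hypothesis added and used, the rest of your matching of hypotheses and conclusions (the tail condition, the concentration on finite subsets via $\sum_\eta s_\eta<\infty$) goes through.
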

	\begin{remark} \label{independent}
		Note that for any two integral bases $\mathbb{E}_1, \mathbb{E}_2$ there exists an integer $c$ such that for all $H\in \mathbb{N}$ holds $O(H,\mathbb{E}_1) \subseteq O(cH, \mathbb{E}_2)$, and thus if \eqref{tail} holds for one integral basis, then it holds for all integral bases. 
% 		Thus, a posteriori $m$ is independent of the integral basis $\mathbb{E}$ due to  \eqref{prod formula}.
	\end{remark}
	
	In addition, we have a similar result to Lemma \ref{lem:original} over number fields, in order to show that Condition \eqref{tail} is satisfied.
	For $M$ a positive integer and $\mathfrak{p} \in \mathcal{P}_K$, consider the unique prime integer $p \in \mathcal{P}$ with $\mathfrak{p} \mid p$. We write  $\mathfrak{p} \succ M$ if and only if   $p>M$. 
	Similarly one defines $\mathfrak{p} \preceq M$. In addition, we will assume that $\infty \preceq M$ for all $M \in \mathbb{N}$.

	\begin{lemma} \label{showdens}
			Let $d$ and $M$ be positive integers. Let $f,g \in \ri[x_1, \ldots, x_d]$ be relatively prime. Define
			\begin{equation*}
			S_M(f,g) = \left\{ a \in \ri^d \mid  f(a) \equiv g(a) \equiv 0 \mod \mathfrak{p} \ \text{for some prime ideal} \  \mathfrak{p} \succ  M \right\},
			\end{equation*}
			then we have for every integral basis $\mathbb{E}$ of $\ri$
			\begin{equation*}
			\lim_{M \rightarrow \infty} \bar{\rho}_\mathbb{E}(S_M(f,g)) = 0. 
			\end{equation*}
		\end{lemma}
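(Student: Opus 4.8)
The plan is to reduce the statement to its known analogue over $\mathbb{Z}$, Lemma~\ref{lem:original}, by pulling everything back along a fixed integral basis together with the field norm. Fix an integral basis $\mathbb{E}=\{e_1,\dots,e_k\}$ of $\ri$. We may assume $f,g$ are non-constant, since if (say) $f$ is a non-zero constant $c\in\ri$ then $f(a)\in\mathfrak{p}$ forces $\mathfrak{p}\mid c$, so $S_M(f,g)=\emptyset$ once $M$ exceeds the finitely many rational primes lying below a prime divisor of $c$. Introduce $dk$ variables $y_{j,l}$ ($1\le j\le d$, $1\le l\le k$) and the linear map $\iota\colon\mathbb{A}^{dk}\to\mathbb{A}^d$ given on functions by $x_j\mapsto\sum_{l=1}^{k}y_{j,l}e_l$; since $\ri$ is a ring this yields $\phi:=f\circ\iota$ and $\psi:=g\circ\iota$ in $\ri[y_{1,1},\dots,y_{d,k}]$. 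Put $\tilde f:=N_{K/\mathbb{Q}}(\phi)$ and $\tilde g:=N_{K/\mathbb{Q}}(\psi)$. These lie in $\mathbb{Z}[y_{1,1},\dots,y_{d,k}]$: writing $\phi=\sum_l F_l e_l$ with $F_l\in\mathbb{Z}[y]$, one has $\tilde f(y)=\det\big(\sum_l F_l(y)M_{e_l}\big)$, where $M_{e_l}$ is the integer matrix of multiplication by $e_l$ in the basis $\mathbb{E}$. Finally, under the $\mathbb{Z}$-module isomorphism $\ri^d\cong\mathbb{Z}^{dk}$ induced by $\mathbb{E}$ the box $O(H,\mathbb{E})^d$ corresponds to $([-H,H[\cap\mathbb{Z})^{dk}$ and the two normalizations coincide, so $\overline{\rho}_\mathbb{E}$ on $\ri^d$ is the ordinary upper density on $\mathbb{Z}^{dk}$.

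I would then check the inclusion of sets: viewed inside $\mathbb{Z}^{dk}$, the set $S_M(f,g)$ is contained in $S_M^{(dk)}(\tilde f,\tilde g):=\{\,y\in\mathbb{Z}^{dk}:p\mid\tilde f(y)\text{ and }p\mid\tilde g(y)\text{ for some prime }p>M\,\}$, the set of Lemma~\ref{lem:original} for $dk$ variables. Indeed, if $a\in S_M(f,g)$ has witness $\mathfrak{p}\succ M$, with $\mathfrak{p}\mid p$ and $p>M$, then $f(a)\in\mathfrak{p}$; if $f(a)=0$ then $\tilde f$ vanishes at the $\mathbb{E}$-coordinates $y$ of $a$, and otherwise $N(\mathfrak{p})=p^{f_{\mathfrak p}}$ divides $\lvert\tilde f(y)\rvert=\lvert N_{K/\mathbb{Q}}(f(a))\rvert$; so in either case $p\mid\tilde f(y)$, and symmetrically $p\mid\tilde g(y)$. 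Assuming for the moment that $\tilde f$ and $\tilde g$ have no common factor in $\mathbb{Q}[y_{1,1},\dots,y_{d,k}]$ — hence are relatively prime in the sense of Lemma~\ref{lem:original} after removing a common integer content, which is harmless because no prime $p>M$ divides it once $M$ is large — Lemma~\ref{lem:original} gives $\lim_{M\to\infty}\overline{\rho}\big(S_M^{(dk)}(\tilde f,\tilde g)\big)=0$, and therefore $0\le\overline{\rho}_\mathbb{E}(S_M(f,g))\le\overline{\rho}\big(S_M^{(dk)}(\tilde f,\tilde g)\big)\to 0$. Since $\mathbb{E}$ was an arbitrary integral basis, this is the lemma.

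The crux — and the only step I expect to require real work — is that $f,g$ having no common factor in $K[x_1,\dots,x_d]$ forces $\tilde f,\tilde g$ to have no common factor over $\overline{\mathbb{Q}}$; I would establish this by bounding $\dim\big(V(\tilde f)\cap V(\tilde g)\big)$. Expanding the norm over the $k$ embeddings $\sigma\colon K\hookrightarrow\overline{\mathbb{Q}}$, extended to automorphisms $\hat\sigma$ of $\overline{\mathbb{Q}}$, yields $\tilde f=\prod_\sigma(\hat\sigma f)\circ(\sigma\iota)$, where $\sigma\iota\colon\mathbb{A}^{dk}\to\mathbb{A}^d$ is $y\mapsto\big(\sum_l y_{j,l}\sigma(e_l)\big)_j$; as $e_1,\dots,e_k$ are $\mathbb{Q}$-linearly independent and $\sigma$ is injective, $\sigma\iota$ is surjective with $\dim\ker(\sigma\iota)=d(k-1)$. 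Hence a point $y\in V(\tilde f)\cap V(\tilde g)$ satisfies, for some pair of embeddings $\sigma_1,\sigma_2$, both $\sigma_1\iota(y)\in V(\hat\sigma_1 f)$ and $\sigma_2\iota(y)\in V(\hat\sigma_2 g)$. If $\sigma_1=\sigma_2=\sigma$, then $\sigma\iota(y)\in V(\hat\sigma f)\cap V(\hat\sigma g)$; coprimality of $f,g$ over $K$ passes to $\hat\sigma f,\hat\sigma g$ over $\overline{\mathbb{Q}}$, so $\dim\big(V(\hat\sigma f)\cap V(\hat\sigma g)\big)\le d-2$ and such $y$ form a set of dimension $\le(d-2)+d(k-1)=dk-2$. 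If $\sigma_1\ne\sigma_2$, the key observation is that the linear functionals $y\mapsto\sum_l y_l\sigma_1(e_l)$ and $y\mapsto\sum_l y_l\sigma_2(e_l)$ on $\overline{\mathbb{Q}}^k$ are linearly independent — were they proportional, the proportionality constant would be $1$ (apply $\sigma_1,\sigma_2$ to $1=\sum_l a_l e_l$), giving $\sigma_1(e_l)=\sigma_2(e_l)$ for all $l$ and hence $\sigma_1=\sigma_2$ — whence $y\mapsto(\sigma_1\iota(y),\sigma_2\iota(y))$ is a surjective linear map $\mathbb{A}^{dk}\to\mathbb{A}^{2d}$ with $(dk-2d)$-dimensional kernel, and such $y$ form a set of dimension $\le(2d-2)+(dk-2d)=dk-2$. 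Thus $\dim\big(V(\tilde f)\cap V(\tilde g)\big)\le dk-2$, so $\tilde f,\tilde g$ share no codimension-one component of their vanishing locus and hence no common factor. Everything apart from this last dimension count — which rests on the linear independence of conjugate coordinate functionals — is routine: the elementary geometry of preimages under surjective linear maps, and the stability of coprimality of polynomials under the field extension $K\subseteq\overline{\mathbb{Q}}$.
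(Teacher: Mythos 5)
Your argument is correct, but it is not the route the paper takes: the paper disposes of this lemma in one line by invoking \cite[Lemma 3.3]{bright2016failures} (an Ekedahl/geometric-sieve statement for subschemes of codimension $\geq 2$ over $\ri$) applied to the subscheme $f=g=0$. You instead give a self-contained reduction to the original Poonen--Stoll Lemma~\ref{lem:original} over $\mathbb{Z}$: pull back along the integral basis, replace $f,g$ by the norm forms $\tilde f,\tilde g$ in $dk$ variables, check the containment $S_M(f,g)\subseteq S_M^{(dk)}(\tilde f,\tilde g)$ via divisibility of $N_{K/\mathbb{Q}}(f(a))$ by $N(\mathfrak p)$, and verify coprimality of the norm forms by the dimension count over the pairs of embeddings. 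All the steps check out: the identification of $\overline{\rho}_{\mathbb{E}}$ with the upper density on $\mathbb{Z}^{dk}$ is exact with the paper's normalizations; the factorization $\tilde f=\prod_\sigma(\hat\sigma f)\circ(\sigma\iota)$ is right; the linear independence of the conjugate coordinate functionals (via applying $\sigma_1,\sigma_2$ to $1=\sum_l a_le_l$) is the correct key point in the off-diagonal case, and the diagonal case correctly uses stability of coprimality under field extension; the content issue for $\tilde f,\tilde g$ in $\mathbb{Z}[y]$ is harmless for large $M$ exactly as you say. What the paper's citation buys is brevity and generality (the Bright--Browning--Loughran lemma handles arbitrary codimension-$2$ subschemes, not just complete intersections $f=g=0$); what your argument buys is self-containedness — it only uses the $\mathbb{Z}$-case already quoted as Lemma~\ref{lem:original} — and it is essentially the norm-form technique used in earlier work of some of the authors (e.g.\ \cite{bib:MicheliCesaro, densitiesunimodular}), so it would serve as a legitimate alternative proof. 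The only points worth making fully explicit in a written version are the two facts you flag as standard: that a common non-constant factor of $\tilde f,\tilde g$ over $\overline{\mathbb{Q}}$ would force a common component of dimension $dk-1$, and that coprimality of multivariate polynomials is preserved under extension of the base field.
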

	\begin{proof}
		This follows directly from \cite[Lemma 3.3]{bright2016failures} applied to the subscheme defined by $f=0=g$.
	\end{proof}

	\section{Higher moments}\label{sec:highermoments}

	In \cite{ltgm} the authors generalized Theorem \ref{PoonenStoll}, the local to global principle over the integers, to expected values. We will now generalize Theorem \ref{PoonenStollNumbfields}, the local to global principle over number fields, to higher moments.
	
	Before we introduce the definition of expected values of systems $(U_\eta)_{\eta \in N_K}$, let us notice that for a general number field $K$, $\eta=\infty$ does no longer correspond to an archimedean place. Still we find it useful to include this possiblity to modifity the box $O(H, \mathbb{E})^d$. Even though $N_K$ does no longer correspond to the set of all places, we will keep using the same notation in order to stay consistent with our previous paper \cite{ltgm}.
	Note that the set of elements living in infinitely many $U_\eta$, i.e.,
	$$I= \{ A \in \ri^d \mid A \in U_\eta \ \text{for infinitely many} \ \eta \in N_K\}$$
	has density zero; this follows directly from Condition \eqref{tail}.
	Let us denote by $O(H, \mathbb{E})_I = O(H, \mathbb{E}) \setminus I.$

	\begin{definition}\label{meandef}
		Let $d$  be a positive integer and assume that $U_\eta$ satisfies the assumptions of Theorem \ref{PoonenStollNumbfields}  for all $\eta \in N_K$, then we define \textit{the expected value
		of the system} $( U_\eta)_{\eta \in N_K}$ to be
		\begin{equation*}
		\mu_\mathbb{E}  = \lim\limits_{H \to \infty} \displaystyle{\frac{\sum\limits_{A \in O(H, \mathbb{E})_I^d  } \card{\{ \eta \in N_K \mid A \in U_{\eta} \} }}{  (2H)^{kd}} },
		\end{equation*}
		if it exists. More generally, for any non-negative integer $n$ we define \textit{the $n$-th moment of the system $(U_\eta)_{\eta\in N_K}$} to be
		\begin{equation*}
		\mu_{n,\mathbb{E}}  = \lim\limits_{H \to \infty} \displaystyle{\frac{\sum\limits_{A \in O(H,\mathbb{E})_I^d  } \card{ \{ \eta \in N_K \mid A \in U_{\eta} \}}^n }{  (2H)^{kd}} },
		\end{equation*}
		if it exists.
	\end{definition}
	This limit essentially gives the expected value of the number of   $\eta$, such that a random element in $\ri^d$ is in $U_\eta$.

	\begin{definition}\label{corr}
		For a set $T\subseteq \ri^d$, we say that a \emph{system $(U_\eta)_{\eta \in N_K}$ corresponds to $T$}, if Condition \eqref{tail} is satisfied and $T^C= P^{-1}(\{\emptyset\})$.
	\end{definition}
	
	As in \cite{ltgm} we can restrict Definition  \ref{meandef} to subsets of $ \ri^d$, i.e.,
	we define the $n$-th moment of the system $(U_\eta)_{\eta \in N_K}$ restricted to $ T \subseteq \ri^d$ to be 
	\begin{equation*}
	\mu_{n,T, \mathbb{E}}  = \lim\limits_{H \to \infty} \displaystyle{\frac{\sum\limits_{A \in O(H, \mathbb{E})_I^d\cap T  } \card{ \{ \eta \in N_K \mid A \in U_{\eta} \} }^n }{ \card{  O(H, \mathbb{E})_I^d \cap T }} },
	\end{equation*}
	if it exists. We will write $\mu_{T,\mathbb{E}}$ for $\mu_{1,T, \mathbb{E}}$ and if it does not depend on the integral basis $\mathbb{E}$, we will just write $\mu_{n,T}$, respectively $\mu_T$ for $\mu_{1,T}$.
	Note, that this is similar to the conditional expected value.

	For any non-negative integer $n$, one can easily pass from $\mu_{n,\mathbb{E}}$ to $\mu_{n,T, \mathbb{E}}$ and vice versa. The proof is the same as in \cite[Lemma 11]{ltgm}.
	
	\begin{lemma}\label{passtoT}
		If the density of $T$ with respect to $\mathbb{E}$ exists and is non-zero and $T$ is such that $T^C \subseteq P^{-1}(\{ \emptyset\})$, then $\mu_{n,T, \mathbb{E}}$ exists if and only if $\mu_{n, \mathbb{E}}$ exist. In addition,   we have that $\mu_{n, \mathbb{E}}=\mu_{n,T, \mathbb{E}} \cdot \rho_\mathbb{E}(T)$.
	\end{lemma}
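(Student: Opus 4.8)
The plan is to use the hypothesis $T^{C} \subseteq P^{-1}(\{\emptyset\})$ to discard the contribution of $T^{C}$ to the sums defining $\mu_{n,\mathbb{E}}$, and then to factor the resulting quotient as the conditional $n$-th moment over $T$ times the density of $T$.

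First I would record that if $A \in T^{C}$ then $P(A)=\emptyset$, so $\card{\{\eta \in N_K \mid A \in U_\eta\}}=0$, and hence (for $n\ge 1$) such an $A$ contributes $0$ to the numerator in Definition \ref{meandef}. Consequently, for every $H$,
\[
\sum_{A \in O(H,\mathbb{E})_I^{d}} \card{\{\eta \in N_K \mid A \in U_\eta\}}^{n}
= \sum_{A \in O(H,\mathbb{E})_I^{d}\cap T} \card{\{\eta \in N_K \mid A \in U_\eta\}}^{n}.
\]

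Next, for $H$ large enough that $O(H,\mathbb{E})_I^{d}\cap T \neq \emptyset$ (which holds eventually, since $\rho_\mathbb{E}(T)>0$), I would write the quotient defining $\mu_{n,\mathbb{E}}$ as the product of
\[
\frac{\sum_{A \in O(H,\mathbb{E})_I^{d}\cap T} \card{\{\eta \in N_K \mid A \in U_\eta\}}^{n}}{\card{O(H,\mathbb{E})_I^{d}\cap T}}
\qquad\text{and}\qquad
\frac{\card{O(H,\mathbb{E})_I^{d}\cap T}}{(2H)^{kd}} .
\]
The first factor is exactly the finite-$H$ approximant of $\mu_{n,T,\mathbb{E}}$. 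For the second factor I would invoke that $I$ has density zero (noted just before Definition \ref{meandef}, a consequence of Condition \eqref{tail}): since $0 \le \card{O(H,\mathbb{E})^{d}\cap T} - \card{O(H,\mathbb{E})_I^{d}\cap T} \le \card{O(H,\mathbb{E})^{d}\cap I} = o\big((2H)^{kd}\big)$ and $\rho_\mathbb{E}(T)$ exists by hypothesis, this factor converges to $\rho_\mathbb{E}(T)$.

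Finally, because $\rho_\mathbb{E}(T)$ is a nonzero limit, the product converges if and only if the first factor converges, which gives the claimed equivalence of existence; passing to the limit then yields $\mu_{n,\mathbb{E}} = \mu_{n,T,\mathbb{E}}\cdot\rho_\mathbb{E}(T)$. I do not anticipate any real difficulty: the only point requiring attention is that the averages are taken over $O(H,\mathbb{E})_I^{d}$ rather than $O(H,\mathbb{E})^{d}$, which is immaterial precisely because $\rho_\mathbb{E}(I)=0$, and the argument is otherwise identical to that of \cite[Lemma 11]{ltgm}.
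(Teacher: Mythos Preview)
Your argument is correct and is precisely the standard one: the paper itself does not give a separate proof but simply states that the proof is the same as in \cite[Lemma~11]{ltgm}, which is exactly the factorisation you carry out. The only minor caveat is your parenthetical restriction to $n\ge 1$, which is appropriate since the identity $\mu_{0,\mathbb{E}}=\mu_{0,T,\mathbb{E}}\cdot\rho_\mathbb{E}(T)$ fails in general; the paper's applications and Theorem~\ref{Thm:highermoments} only use $n\ge 1$ anyway.
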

	
	We have the following straightforward corollary from Theorem \ref{PoonenStollNumbfields}. 
	
	\begin{corollary}\label{denofUW}
		Let $\eta_1, \dots, \eta_n \in N_K$ with $ \eta_i \neq \eta_j$ for $i\neq j$ and let $U_{\eta_j}$ be chosen as in Theorem \ref{PoonenStollNumbfields}, then 
		$$\rho_\mathbb{E}\left( \bigcap_{j=1}^n\left(U_{\eta_j} \cap \ri^d  \right) \right) 
		%= \prod_{j=1}^n \nu_{\eta_j}(U_{\eta_j} \cap \ri^d) 
		= \prod_{j=1}^n s_{\eta_j},$$
		where again  $s_\eta= \nu_\eta(U_\eta).$
	\end{corollary}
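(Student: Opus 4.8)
The plan is to recognise the intersection as a preimage under the map $P$ of Theorem \ref{PoonenStollNumbfields} and to read off its density from that theorem. Write $A = \{\eta_1,\dots,\eta_n\}$ and $\mathcal S = \{ S \in 2^{N_K} \mid A \subseteq S \}$. For $a \in \ri^d$ one has $a \in \bigcap_{j=1}^n U_{\eta_j}$ precisely when $\eta_j \in P(a)$ for every $j$, that is, when $P(a) \in \mathcal S$; hence $\bigcap_{j=1}^n\bigl(U_{\eta_j} \cap \ri^d\bigr) = P^{-1}(\mathcal S)$. By part (2) of Theorem \ref{PoonenStollNumbfields} the density $m(\mathcal S) = \rho_\mathbb{E}(P^{-1}(\mathcal S))$ exists, which already disposes of the existence claim.

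To evaluate $m(\mathcal S)$, I would invoke part (3) of Theorem \ref{PoonenStollNumbfields}: the infinite members of $\mathcal S$ contribute nothing, and since the finite subsets of the countable set $N_K$ themselves form a countable set, countable additivity of $m$ together with \eqref{prod formula} gives
\begin{equation*}
m(\mathcal S) \;=\; \sum_{\substack{S \supseteq A \\ \card{S} < \infty}}\ \prod_{\eta \in S} s_\eta \prod_{\eta \notin S}(1 - s_\eta),
\end{equation*}
a series of nonnegative terms bounded above by $m(2^{N_K}) = \rho_\mathbb{E}(\ri^d) = 1$, hence absolutely convergent. Writing every such $S$ uniquely as $S = A \sqcup S'$ with $S' \subseteq N_K \setminus A$ finite and pulling out the factor $\prod_{\eta \in A} s_\eta$ yields $m(\mathcal S) = \bigl(\prod_{\eta \in A} s_\eta\bigr)\,\Sigma$, where $\Sigma = \sum_{S' \subseteq N_K \setminus A,\ \card{S'} < \infty} \prod_{\eta \in S'} s_\eta \prod_{\eta \in (N_K \setminus A)\setminus S'}(1 - s_\eta)$.

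Finally I would show $\Sigma = 1$ by applying the same bookkeeping to the whole space. By part (3) of Theorem \ref{PoonenStollNumbfields} once more, $1 = m(2^{N_K}) = \sum_{S\ \mathrm{finite}} \prod_{\eta \in S} s_\eta \prod_{\eta \notin S}(1 - s_\eta)$; grouping the finite sets $S$ according to $T := S \cap A$ and factoring exactly as above, the group with $S \cap A = T$ contributes $\bigl(\prod_{\eta \in T} s_\eta \prod_{\eta \in A \setminus T}(1 - s_\eta)\bigr)\,\Sigma$, so
\begin{equation*}
1 \;=\; \Sigma \cdot \sum_{T \subseteq A} \prod_{\eta \in T} s_\eta \prod_{\eta \in A \setminus T}(1 - s_\eta) \;=\; \Sigma \cdot \prod_{\eta \in A}\bigl(s_\eta + (1 - s_\eta)\bigr) \;=\; \Sigma .
\end{equation*}
Hence $\rho_\mathbb{E}\bigl(\bigcap_{j=1}^n (U_{\eta_j} \cap \ri^d)\bigr) = \prod_{j=1}^n s_{\eta_j}$. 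There is no genuine obstacle here; the only point needing a little care is the rearrangement of the two absolutely convergent series into the claimed factored forms (equivalently, one may observe that $(U_\eta)_{\eta \in N_K \setminus A}$ still satisfies Condition \eqref{tail}, since dropping the finitely many indices of $A$ leaves the tail union over $\mathfrak p \succ M$ unchanged for large $M$, and then identify $\Sigma$ with the total mass of the probability measure Theorem \ref{PoonenStollNumbfields} attaches to that subsystem). Everything else is a direct appeal to Theorem \ref{PoonenStollNumbfields}.
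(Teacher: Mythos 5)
Your argument is correct and is exactly the direct deduction from Theorem \ref{PoonenStollNumbfields} that the paper intends (it states the corollary without proof, calling it straightforward): the identification $\bigcap_{j=1}^n\bigl(U_{\eta_j}\cap\ri^d\bigr)=P^{-1}(\mathcal S)$ with $\mathcal S=\{S\supseteq\{\eta_1,\dots,\eta_n\}\}$, the appeal to countable additivity of $m$ together with its vanishing on collections of infinite subsets, and the factorization showing $\Sigma=1$ are all sound rearrangements of nonnegative, convergent series. Your parenthetical shortcut --- replacing $U_\eta$ by $\emptyset$ for $\eta\notin\{\eta_1,\dots,\eta_n\}$, noting that Condition \eqref{tail} holds trivially for the modified system, and reading the answer off \eqref{prod formula} as $m(\{\{\eta_1,\dots,\eta_n\}\})=\prod_j s_{\eta_j}\prod_{\eta\notin\{\eta_1,\dots,\eta_n\}}(1-0)$ --- is the quickest way to write the same thing down.
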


	With the above corollary we are able to prove the following generalized version of Theorem \ref{PoonenStollNumbfields}, the main result of this paper.

	\begin{theorem}\label{Thm:highermoments}
		Let $d$ and $n$ be positive integers. Let $K$ be a number field with $k = [K: \mathbb{Q}]$, $\ri$ its ring of integers and $\mathbb{E}$ an integral basis of $\ri$. For each  $\mathfrak{p} \in \mathcal{P}_K$, let $U_\mathfrak{p} \subseteq \Zp^d$ be such that $\nu_\mathfrak{p}(\partial(U_\mathfrak{p})) =0$ and define $s_\mathfrak{p} = \nu_\mathfrak{p}(U_\mathfrak{p})$.  Let $U_\infty \subseteq k_\infty^d$, such that $\mu_\infty(\partial(U_\infty))=0$ and $\mathbb{R}_{\geq 0} \cdot U_\infty  = U_\infty$. We define $s_\infty= \frac{1}{2^{dk}}\nu_\infty\left(U_\infty \cap [-1,1] \cdot O(1, \mathbb{E})^d \right)$.
% 		Define the following map 
% 		\begin{eqnarray*}
% 			P: \ri^d   &\rightarrow &  2^{N_K}, \\
% 			a  &\mapsto & \left\{ \eta \in N_K \mid a \in U_{\eta} \right\}.
% 		\end{eqnarray*}
		If \begin{equation} \label{densitycond}
		\lim_{M \rightarrow \infty} \overline{\rho}_\mathbb{E} \left( \ri^d \cap \bigcup_{p>M} \bigcup_{\mathfrak{p}\in \mathcal{P}_K, \ \mathfrak{p} \vert p} U_\mathfrak{p} \right) = 0,
		\end{equation}
		is satisfied and for some $\alpha \in [0, \infty]$ there exist absolute constants $c', c \in \mathbb{Z}$, such that for all $H\geq 1$ and for all $A \in O(H,\mathbb{E})_I^d$ one has that
		\begin{equation}\label{newcond}
		\left\vert \left\{ \mathfrak{p}\in \mathcal{P}_K  \mid \mathfrak{p} \succ c'H^\alpha,  A \in U_\mathfrak{p} \cap O(H, \mathbb{E})_I^d  \right\} \right\vert <c
		\end{equation} 
		and  that there exists a sequence  $(v_\mathfrak{p})_{\mathfrak{p}\in \mathcal{P}_K}$, such that for all $\mathfrak{p}_1, \dots, \mathfrak{p}_n \preceq c'H^\alpha$ 
		one has that
		\begin{eqnarray}
		\left\vert \bigcap_{j=1}^n U_{\mathfrak{p}_j} \cap O(H, \mathbb{E})_I^d \right\vert & \leq (2H)^{kd} \prod\limits_{j=1}^{n} v_{\mathfrak{p}_j}, 
		\label{newcond2} \\
		\sum_{\mathfrak{p} \in \mathcal{P}_K} v_\mathfrak{p} & \text{converges}, \label{newcond3}
		\end{eqnarray}
		then it follows that
		\begin{align*}
		\mu_{n, \mathbb{E}}  &=  \lim\limits_{H \to \infty} \displaystyle{\frac{\sum\limits_{A \in O(H, \mathbb{E})_I^d
				  } \card{ \{ \eta \in N_K \mid A \in U_{\eta} \}}^n }{  (2H)^{kd}} }
		\end{align*}
		exists and $\mu_{n, \mathbb{E}}<\infty$. For $\tau\in \mathbb{N}^n$ with $\sum\limits_{j=1}^n j \tau_j=n$, we define $\ell(\tau) = \sum\limits_{j=1}^n \tau_j$ and denote by $c(\tau)$ the number of partitions of $\{1, \dots, n\}$ which contain exactly $\tau_j$ sets of cardinality $j$. Then we have the formula
		\begin{equation}\label{mean}
		\mu_{n, \mathbb{E}} = \sum_{\tau \in \mathbb{N}^n, \ \sum_{j=1}^n j \tau_j =n} c(\tau) \sum_{\substack{\eta_1, \dots, \eta_{\ell(\tau)}\in N_K  \\ \forall i<j \in \{1, \dots, \ell(\tau)\},  \ \eta_i \neq \eta_j}} \prod\limits_{m=1}^{\ell(\tau)} s_{\eta_m},
		\end{equation}
	\end{theorem}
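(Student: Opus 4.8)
The plan is to compute the $n$-th moment by expanding the $n$-th power of the counting function combinatorially, reducing it to a sum over tuples $(\eta_1,\dots,\eta_n)\in N_K^n$, and then grouping these tuples according to the partition of $\{1,\dots,n\}$ they induce (two indices in the same block iff the corresponding $\eta$'s are equal). First I would write, for $A\in O(H,\mathbb{E})_I^d$,
\begin{equation*}
\card{\{\eta\in N_K\mid A\in U_\eta\}}^n=\sum_{(\eta_1,\dots,\eta_n)\in N_K^n}\ \prod_{m=1}^n \mathbf{1}_{U_{\eta_m}}(A),
\end{equation*}
and partition the index set $N_K^n$ by the equivalence pattern of the tuple. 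A tuple whose distinct values number $\ell$ and whose multiplicities form a partition of type $\tau$ (i.e. $\tau_j$ blocks of size $j$, so $\sum_j j\tau_j=n$ and $\ell(\tau)=\sum_j\tau_j=\ell$) contributes $\prod_{m=1}^{\ell}\mathbf{1}_{U_{\eta_m}}(A)$, since $\mathbf{1}_{U}^j=\mathbf{1}_U$. The number of tuples giving a fixed set of distinct values $\{\eta_1,\dots,\eta_\ell\}$ with pattern $\tau$ is exactly $c(\tau)$, the number of set partitions of $\{1,\dots,n\}$ of type $\tau$. Summing over $A$ and dividing by $(2H)^{kd}$, we are reduced to showing that for each fixed $\ell$ and each choice of distinct $\eta_1,\dots,\eta_\ell$,
\begin{equation*}
\lim_{H\to\infty}\frac{1}{(2H)^{kd}}\sum_{A\in O(H,\mathbb{E})_I^d}\prod_{m=1}^{\ell}\mathbf{1}_{U_{\eta_m}}(A)=\prod_{m=1}^{\ell}s_{\eta_m},
\end{equation*}
together with a dominated-convergence-type argument that lets us interchange the limit with the (infinite) sum over tuples.

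For the finite-dimensional piece, the inner limit is exactly $\rho_\mathbb{E}\bigl(\bigcap_{m=1}^\ell(U_{\eta_m}\cap\ri^d)\bigr)$ (up to the harmless removal of the density-zero set $I$, which does not affect densities), and this equals $\prod_{m=1}^\ell s_{\eta_m}$ by Corollary \ref{denofUW}. So the content is entirely in justifying the interchange of limit and summation over the infinitely many tuples $(\eta_1,\dots,\eta_n)$. Here I would split $N_K^n$ into tuples all of whose entries satisfy $\eta\preceq c'H^\alpha$ and tuples with at least one entry $\succ c'H^\alpha$. For the first (finite, $H$-dependent but controlled) family, condition \eqref{newcond2} gives the uniform bound
\begin{equation*}
\frac{1}{(2H)^{kd}}\Bigl\lvert\bigcap_{m=1}^{\ell}U_{\eta_m}\cap O(H,\mathbb{E})_I^d\Bigr\rvert\le\prod_{m=1}^{\ell}v_{\eta_m},
\end{equation*}
and \eqref{newcond3} makes $\sum_{\eta\in N_K}v_\eta$ (hence $\bigl(\sum_\eta v_\eta\bigr)^n$, which dominates the sum over all such tuples with repetition) finite; this is the Weierstrass $M$-test / dominated convergence input that legitimizes passing the limit inside. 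For the tail family, any tuple with some entry $\succ c'H^\alpha$ contributes to $A$ only if $A\in U_\mathfrak{p}$ for some $\mathfrak{p}\succ c'H^\alpha$; condition \eqref{newcond} caps, for each such $A$, the number of relevant primes by the absolute constant $c$, so the number of tuples that can contribute at a given $A$ is at most (roughly) $c\cdot\bigl(\sum_\eta v_\eta + c\bigr)^{n-1}$ or similar — in any case bounded — while the set of $A\in O(H,\mathbb{E})_I^d$ lying in some $U_\mathfrak{p}$ with $\mathfrak{p}\succ c'H^\alpha\ge c'\cdot 1$ has upper density tending to $0$ as $H\to\infty$ by \eqref{densitycond} (equivalently Lemma \ref{showdens}/Theorem \ref{PoonenStollNumbfields}). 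Hence the tail family's total contribution to $\mu_{n,\mathbb{E}}$ vanishes in the limit.

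Combining: the limit exists, is finite (bounded by $\sum_\tau c(\tau)\bigl(\sum_\eta s_\eta\bigr)^{\ell(\tau)}<\infty$ using that $\sum_\eta s_\eta$ converges by Theorem \ref{PoonenStollNumbfields}(1) and $s_\eta\le v_\eta$ is not needed — convergence of $\sum s_\eta$ suffices), and equals
\begin{equation*}
\sum_{\tau\in\mathbb{N}^n,\ \sum_j j\tau_j=n}c(\tau)\sum_{\substack{\eta_1,\dots,\eta_{\ell(\tau)}\in N_K\\ \eta_i\neq\eta_j\ (i<j)}}\prod_{m=1}^{\ell(\tau)}s_{\eta_m},
\end{equation*}
which is \eqref{mean}. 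The main obstacle is the uniform control in the tail: one must ensure that the bound on the number of contributing tuples near a given $A$ (from \eqref{newcond}) combines correctly with the vanishing density from \eqref{densitycond} and does not blow up when several of the $\eta_m$ simultaneously exceed $c'H^\alpha$ — this requires carefully bookkeeping which of the $\ell$ distinct values are "large" versus "small" and applying \eqref{newcond2}--\eqref{newcond3} to the small ones and \eqref{newcond}+\eqref{densitycond} to the large ones within the same tuple. Everything else is the same bookkeeping as in \cite[Theorem 9]{ltgm}, now carried out over $\ri^d$ with $\nu_\mathfrak{p}$, $\nu_\infty$ in place of $\mu_p$, $\mu_\infty$.
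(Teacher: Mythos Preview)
Your overall strategy matches the paper's: expand $\lvert\{\eta:A\in U_\eta\}\rvert^n$ as a sum over tuples $(\eta_1,\dots,\eta_n)\in N_K^n$, group by the induced partition of $\{1,\dots,n\}$, and use Corollary~\ref{denofUW} to evaluate the limit of each term. The organizational difference is that the paper introduces an auxiliary \emph{fixed} cutoff $M$ and works with the double limit $\lim_{M\to\infty}\lim_{H\to\infty}$, splitting $\sum_\eta\tau(A,\eta)$ into $\sum_{\eta\preceq M}+\sum_{\mathfrak p\succ M}$ and binomially expanding the $n$-th power into terms $R_j(M,H)$. The advantage of the fixed $M$ is that the ``main'' term $R_n(M,H)$ is a genuinely finite sum in the $\eta$'s, so the limit in $H$ is termwise and no dominated-convergence argument is needed there; your direct split at the $H$-dependent threshold $c'H^\alpha$ forces you to run dominated convergence on an $H$-dependent index set, which is workable but slightly more delicate.

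There is, however, a real gap in your tail estimate. You assert that for each $A$ lying in some $U_{\mathfrak p}$ with $\mathfrak p\succ c'H^\alpha$, ``the number of tuples that can contribute at a given $A$ is at most (roughly) $c\cdot(\sum_\eta v_\eta+c)^{n-1}$ or similar --- in any case bounded''. This is not correct: condition~\eqref{newcond} bounds only the number of \emph{large} primes containing $A$ by $c$, but the number of \emph{small} $\eta$ with $A\in U_\eta$ is not uniformly bounded. A tail tuple has at least one large entry and up to $n-1$ small entries, so the number of such tuples at a fixed $A$ can be of order $n\,c\,N_s(A)^{n-1}$, with $N_s(A)=\lvert\{\eta\preceq c'H^\alpha:A\in U_\eta\}\rvert$ unbounded in $A$ and $H$. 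Multiplying an unbounded count by a vanishing density does not give zero.

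The paper handles exactly this ``mixed'' contribution via H\"older's inequality: for $0<j<n$,
\[
R_j(M,H)=\frac{1}{(2H)^{kd}}\sum_{A}\Bigl(\textstyle\sum_{\mathfrak p\succ M}\tau(A,\mathfrak p)\Bigr)^{n-j}\Bigl(\textstyle\sum_{\eta\preceq M}\tau(A,\eta)\Bigr)^{j}
\le S_n(M,H)^{(n-j)/n}\,R_n(M,H)^{j/n},
\]
where $S_n(M,H)\to0$ (this is where the further split at $c'H^\alpha$ and conditions \eqref{newcond}, \eqref{newcond2}, \eqref{newcond3}, \eqref{densitycond} are all used) and $R_n(M,H)$ is bounded by $(\sum_\eta v_\eta)^n$. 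This H\"older step is the missing ingredient in your sketch; once you insert it, your argument (even with the $c'H^\alpha$ split) goes through for $\alpha>0$. You correctly flag the mixed tuples as ``the main obstacle'', but the bound you propose does not work, and the resolution is not just bookkeeping.
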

	\begin{remark}
	    Note that similarly to Remark \ref{independent} we get that the conditions are satisfied for all integral bases (with possibly different constants) as soon as they are satisfied for one particular integral basis.  Also, the expression in \eqref{mean} is independent of the integral basis for $U_\infty = \emptyset$ (as only $s_\infty$ depends on $\mathbb{E}$). Note that the dependence on $\mathbb{E}$ is not just a technicality, but reflects the fact that the basis alters the way the density of $U_\infty$ is measured (as it is a cone and not a lattice, its density is not invariant under $\mathbb{Z}$-module isomorphisms). We could easily develop our theorem in the same setting as in \cite{bright2016failures}, where the box $O(H,\mathbb{E})^d$ is replaced by $H \Omega_\infty \cap \ri^d$ for some bounded set $\Omega_\infty \subset k_\infty^d$ with $\mu_\infty(\partial \Omega_\infty)=0$ and $\mu_\infty(\Omega_\infty)>0$. The proof would be the same and the results would only differ by normalizing constants. 
	    
	    Finally, if the conditions of Theorem \ref{Thm:highermoments} are satisfied for some integer $n \geq 1$, then the same holds true for any positive integer $m\leq n$.

	\end{remark}
	\begin{proof}
		For $A\in \ri^d$ and $\eta \in N_K$, we define
		\begin{align*}
		\tau(A,\eta) = \begin{cases}
		1,& A\in U_\eta,\\
		0,& \text{else}.
		\end{cases}
		\end{align*}
		For $M\in \mathbb{N},$ we have that
		$$\sum\limits_{A\in O(H, \mathbb{E})_I^d} \frac{\left( \sum\limits_{\eta \in N_K} \tau(A,\eta) \right)^n}{(2H)^{kd}}
		=\sum\limits_{j=0}^n \binom{n}{j}  R_j(M,H),$$
		where for all $j \in \{0, \ldots, n\}$, we define
		\begin{align*}
		R_j(M,H) \coloneqq \sum_{A\in O(H, \mathbb{E})_I^d} \frac{\left( \sum\limits_{\mathfrak{p}\in \mathcal{P}_K, \ \mathfrak{p} \succ M} \tau(A,\mathfrak{p}) \right)^{n-j} \left( \sum\limits_{\eta \in N_K, \ \eta \preceq M} \tau(A,\eta)\right)^j}{(2H)^{kd}}.
		\end{align*}
		First we show that for all $j\in \{0, \dots, n-1\}$ the terms $R_j(M,H)$ are negligible for $M$ going to infinity. We define
		\begin{align*}
		\ell_{A,H} \coloneqq \left\vert \left\{ \mathfrak{p}\in \mathcal{P}_K  \mid \mathfrak{p} \succ c'H^\alpha, A \in U_\mathfrak{p} \cap O(H, \mathbb{E})_I^d  \right\} \right\vert.
		\end{align*}
		Then by \eqref{newcond} there exists $c>0$ such that for all $A\in \ri^d$ and all $H \geq 1$ holds $\ell_{A,H} \leq c$. Thus, we get that
		\begin{align*}
	S_n(M & ,H):=  \sum\limits_{A\in O(H, \mathbb{E})_I^d} \frac{\left( \sum\limits_{\mathfrak{p}\in \mathcal{P}_K, \ \mathfrak{p} \succ M} \tau(A,\mathfrak{p}) \right)^n}{(2H)^{kd}} \\
		= & \sum\limits_{i=0}^n \binom{n}{i} \sum\limits_{\substack{A\in O(H, \mathbb{E})_I^d \\ A \in \bigcup_{\mathfrak{p}_1, \dots, \mathfrak{p}_n \succ M} \bigcap_{j=1}^n U_{\mathfrak{p}_j}}} \hspace{-1cm}  \frac{ \vert \{ (\mathfrak{p}_j)_{j=1}^n \in \mathcal{P}_K^n \ \vert \  M \prec \mathfrak{p}_1, \dots, \mathfrak{p}_i \prec c'H^\alpha \prec \mathfrak{p}_{i+1}, \dots, \mathfrak{p}_n  \} \vert }{(2H)^{kd}} \\
		\leq & \sum_{i=0}^n \binom{n}{i} \sum\limits_{\substack{A\in O(H, \mathbb{E})_I^d \\ A \in \bigcup_{\mathfrak{p}_1, \dots, \mathfrak{p}_n \succ M} \bigcap_{j=1}^n U_{\mathfrak{p}_j}}} \hspace{-0.8 cm} \frac{\ell_{A,H}^{n-i} \vert \{ (\mathfrak{p}_j)_{j=1}^i \in \mathcal{P}_K^i \ \vert \   M \prec \mathfrak{p}_1, \dots, \mathfrak{p}_i \prec c'H^\alpha  \} \vert }{(2H)^{kd}} \\
		\leq  & c^{n} \frac{\vert O(H, \mathbb{E})_I^d \cap \bigcup\limits_{\mathfrak{p}\in \mathcal{P}_K, \ \mathfrak{p} \succ M} U_\mathfrak{p} \vert}{(2H)^{kd}} 
		+\sum\limits_{i=1}^n \binom{n}{i} c^{n-i} \sum\limits_{\substack{(\mathfrak{p}_1, \dots, \mathfrak{p}_i)\in \mathcal{P}_K^i \\ M \prec \mathfrak{p}_1, \dots, \mathfrak{p}_i \prec c'H^\alpha}} \hspace{-0.4cm} \frac{\vert O(H, \mathbb{E})_I^d \cap \bigcap\limits_{j=1}^i U_{\mathfrak{p}_j} \vert}{(2H)^{kd}},
		\end{align*}
% 		where we have omitted to write $ (\mathfrak{p}_1, \dots, \mathfrak{p}_n) \in \mathcal{P}_K^n$ for space reasons.
		Using  \eqref{newcond2}, we further have that
		\begin{align*}
		S_n(M,H) \leq & c^{n} \frac{\vert O(H, \mathbb{E})_I^d \cap \bigcup\limits_{\mathfrak{p}\in \mathcal{P}_K, \ \mathfrak{p} \succ M} U_\mathfrak{p} \vert}{(2H)^{kd}} \\ &
		+ \sum\limits_{i=1}^n \binom{n}{i} c^{n-i} \sum\limits_{\substack{(\mathfrak{p}_1, \dots, \mathfrak{p}_i)\in \mathcal{P}_K^i \\ M \prec \mathfrak{p}_1, \dots, \mathfrak{p}_i \prec c'H^\alpha}} \left( \prod_{j=2}^{i} v_{\mathfrak{p}_j} \right) v_{\mathfrak{p}_1}^{n-i+1} \\
		\leq  & c^{n} \frac{\vert O(H, \mathbb{E})_I^d \cap \bigcup\limits_{\mathfrak{p}\in \mathcal{P}_K, \ \mathfrak{p} \succ M} U_\mathfrak{p} \vert}{(2H)^{kd}} \\ &
		+ \sum\limits_{i=1}^n \binom{n}{i} c^{n-i} \left( \sum_{\mathfrak{p} \in \mathcal{P}_K, \ \mathfrak{p} \succ M} v_\mathfrak{p} \right)^{i-1} \left( \sum_{\mathfrak{p} \in \mathcal{P}_K, \ \mathfrak{p} \succ M} v_\mathfrak{p}^{n-i+1} \right).
		\end{align*}
		
		This implies that
		\begin{align*}
		\limsup_{H\rightarrow \infty} S_n(M,H)
		\leq  & c^{n} \overline{\rho}_\mathbb{E}\left(  \ri^d \cap \bigcup\limits_{\mathfrak{p}\in \mathcal{P}_K, \ \mathfrak{p} \succ M} U_\mathfrak{p} \right)  \\ &
		+ \sum\limits_{i=1}^n \binom{n}{i} c^{n-i} \left( \sum\limits_{\mathfrak{p} \in \mathcal{P}_K, \ \mathfrak{p} \succ M} v_\mathfrak{p} \right)^{i-1} \left( \sum\limits_{\mathfrak{p} \in \mathcal{P}_K, \ \mathfrak{p} \succ M} v_\mathfrak{p}^{n-i+1} \right).
		\end{align*}
		Thus, we get from \eqref{densitycond} and \eqref{newcond3}
		\begin{equation} \label{Sn}
		\lim_{M \rightarrow \infty} \limsup_{H\rightarrow \infty} S_n(M,H) =0.
		\end{equation}
		On the other hand, \eqref{newcond2} and \eqref{newcond3} imply that
		\begin{equation} \label{Rn}
		\begin{split}
		R_n(M,H)&=\sum_{A\in O(H, \mathbb{E})_I^d} \frac{\left( \sum\limits_{\eta \preceq M } \tau(A,\eta) \right)^n}{(2H)^{kd}} \\
		&= \sum\limits_{\eta_1, \dots, \eta_n \preceq M } \sum\limits_{A\in O(H, \mathbb{E})_I^d} \frac{\prod\limits_{j=1}^n \tau(A,\eta_j)}{(2H)^{kd}} \\
		&= \sum\limits_{\eta_1, \dots, \eta_n \preceq M }  \frac{\vert O(H, \mathbb{E})_I^d \cap \bigcap\limits_{j=1}^n U_{\eta_j} \vert}{(2H)^{kd}} \\
		&\leq\sum\limits_{\eta_1, \dots, \eta_n \preceq M } \prod\limits_{j=1}^n v_{\eta_j} \\
		&\leq \left( \sum\limits_{\eta\in N_K} v_\eta \right)^n <\infty.
		\end{split}
		\end{equation}
		For $j\in \{1, \dots, n-1\}$ we have by Hölder's inequality with $p=\frac{n}{n-j}$ and $q=\frac{n}{j}$, that
		\begin{align*}
		R_j(M,H) \leq S_n(M,H)^{(n-j)/n}  R_n(M,H)^{j/n}.
		\end{align*}
		Thus, by \eqref{Sn} and \eqref{Rn} we get for all $j\in \{0, \dots, n-1\}$ 
		\begin{equation*}
		\lim_{M \rightarrow \infty} \limsup_{H\rightarrow \infty} R_j(M,H) = 0.
		\end{equation*}
		Hence, if we can show that $\lim\limits_{M \rightarrow \infty} \lim\limits_{H \to \infty} R_n(M,H)$ exists, then $\mu_{n, \mathbb{E}}$ exists as well and we have $$\mu_{n, \mathbb{E}} = \lim\limits_{M \rightarrow \infty} \lim\limits_{H\rightarrow \infty} R_n(M,H).$$ 
		
		For $\tau\in \mathbb{N}^n$ with $\sum\limits_{j=1}^n j \tau_j=n$, we define $\ell(\tau) = \sum_{j=1}^n \tau_j$ and denote by $c(\tau)$ the number of partitions of $\{1, \dots, n\}$ which contain exactly $\tau_j$ sets of cardinality $j$.
		Using Corollary \ref{denofUW}, we obtain

		\begin{align*}
		\mu_{n, \mathbb{E}} = & \lim\limits_{M \rightarrow \infty} \lim\limits_{H \to \infty} R_n(M,H)
		\\ = & \lim\limits_{M \rightarrow \infty} \lim\limits_{H \to \infty} \sum\limits_{\eta_1, \dots, \eta_n \preceq M } \frac{\vert O(H, \mathbb{E})_I^d \cap \bigcap\limits_{j=1}^n U_{\eta_j} \vert}{(2H)^{kd}} \\
		&= \lim\limits_{M \rightarrow \infty} \lim\limits_{H \to \infty} \sum\limits_{\tau \in \mathbb{N}^n, \ \sum_{j=1}^n j \tau_j =n} c(\tau) \sum\limits_{\substack{\eta_1, \dots, \eta_{\ell(\tau)} \preceq M  \\ \forall i<j\in \{1, \dots, \ell(\tau)\}, \ \eta_i \neq \eta_j}} \frac{\vert O(H, \mathbb{E})_I^d \cap \bigcap\limits_{j=1}^{\ell(\tau)} U_{\eta_j} \vert}{(2H)^{kd}} \\
		&= \lim\limits_{M \rightarrow \infty} \sum\limits_{\tau \in \mathbb{N}^n, \ \sum\limits_{j=1}^n j \tau_j =n} c(\tau) \sum\limits_{\substack{\eta_1, \dots, \eta_{\ell(\tau)}\preceq M  \\ \forall i<j\in \{1, \dots, \ell(\tau)\}, \ \eta_i \neq \eta_j}} \rho_\mathbb{E} \left( \bigcap\limits_{j=1}^{\ell(\tau)} U_{\eta_j} \cap \ri^d \right)  \\
		&= \lim\limits_{M \rightarrow \infty} \sum\limits_{\tau \in \mathbb{N}^n, \ \sum\limits_{j=1}^n j \tau_j =n} c(\tau) \sum\limits_{\substack{\eta_1, \dots, \eta_{\ell(\tau)} \preceq M  \\ \forall i<j\in \{1, \dots, \ell(\tau)\}, \ \eta_i \neq \eta_j}} \prod\limits_{j=1}^{\ell(\tau)} s_{\eta_j}  \\
		&=\sum\limits_{\tau \in \mathbb{N}^n,\ \sum_{j=1}^n j \tau_j =n} c(\tau) \sum\limits_{\substack{\eta_1, \dots, \eta_{\ell(\tau)} \in N_K \\ \forall i<j\in \{1, \dots, \ell(\tau)\} , \ \eta_i \neq \eta_j}} \prod\limits_{j=1}^{\ell(\tau)} s_{\eta_j}.
		\end{align*}
		Note that this last expression is finite. We have the crude estimate
		\begin{align*}
		    \mu_{n, \mathbb{E}} 
		    \leq 2^n n^n \left(1+ \sum\limits_{\eta\in N_K} s_\eta \right)^n < \infty
		\end{align*}
		by \eqref{newcond3}.
 
	\end{proof}
	
	\begin{remark}
		The conditions of Theorem \ref{PoonenStollNumbfields} also allow to conclude the existence of  all central moments up to order $n$. For $j\in \{1, \dots, n\}$ we have
		\begin{equation*}
		\lim\limits_{H \to \infty} \displaystyle{\frac{\sum\limits_{A \in O(H, \mathbb{E})_I^d  } \left(\card{\{ \eta \in N_K \mid A \in U_{\eta} \}  } - \mu_\mathbb{E}\right)^n }{  (2H)^{kd}} }
		= \sum_{j=0}^n \binom{n}{j} (-\mu_{\mathbb{E}})^{n-j} \mu_{j, \mathbb{E}}.
		\end{equation*}
		Now we briefly compute the variance
		\begin{align*}
		\sigma^2_\mathbb{E} = \lim\limits_{H \to \infty} \displaystyle{\frac{\sum\limits_{A \in O(H, \mathbb{E})_I^d  } (\card{ \{ \eta \in N_K \mid A \in U_{\eta} \} } - \mu_\mathbb{E})^2 }{  (2H)^{kd}} }.
		\end{align*}
		For $n=2$ we have two $\tau \in \mathbb{N}^2$ such that $\tau_1+\tau_2 =2$, namely $\tau^{(1)}= (2,0)$ and $\tau^{(2)} = (0,1)$. One readily computes $c\left(\tau^{(1)}\right)=1, \ell\left(\tau^{(1)}\right)=2$ and $c\left(\tau^{(2)}\right)=1, \ell\left(\tau^{(2)}\right)=1$. Thus, we get
		\begin{align*}
		\mu_{2, \mathbb{E}} 
		= \sum_{\eta_1, \eta_2 \in N_K,\ \eta_1 \neq \eta_2 } s_{\eta_1} s_{\eta_2} + \sum_{\eta \in N_K} s_\eta
		= \mu_\mathbb{E}^2 - \sum_{\eta \in N_K} s_\eta^2 + \mu_\mathbb{E}.
		\end{align*}
		Hence, we obtain
		\begin{equation*}
		\begin{split}
		\sigma^2_\mathbb{E} 
		= \mu_{2, \mathbb{E}} - 2 \mu_\mathbb{E}^2 + \mu_\mathbb{E}^2 
		= \mu_\mathbb{E} - \sum_{\eta \in N_K} s_\eta^2.
		\end{split}
		\end{equation*}
		Also for the variance we can restrict to subsets  $T \subseteq \ri^d, $ as
		
		\begin{align*}
		\sigma^2_{T, \mathbb{E}} = \lim\limits_{H \to \infty} \displaystyle{\frac{\sum\limits_{A \in O(H, \mathbb{E})_I^d \cap T } (\mid \{ \eta \in N_K \mid A \in U_{\eta} \}  \mid - \mu_\mathbb{E})^2 }{  \mid O(H, \mathbb{E})^d \cap T \mid} }.
		\end{align*}
		Due to Lemma \ref{passtoT}, if $T^C \subseteq P^{-1}(\{ \emptyset\})$ and the density of $T$ is non-zero, we get
		\begin{align*}
		    \sigma^2_T = \frac{1}{\rho(T)}\left( \mu^2 - \sum\limits_{\eta \in N_K} s_\eta^2 + \mu\right) - \frac{2\mu_T}{\rho(T)} \mu + \mu_T^2.
		\end{align*}
	\end{remark}

\section{Applications}\label{sec:applications}
	\subsection{Density computations}

In this section, we compute the densities of Eisenstein and shifted Eisenstein polynomials over number fields, using Theorem \ref{PoonenStollNumbfields}.

\begin{definition}\label{eisdefn_fp}
Let $\fp$ be a non-zero prime ideal of $\ri$. A polynomial $f(x) \in \ri[x]$ of degree $d$ represented by the tuple $(a_0, \ldots, a_{d-1}, a_d) \in \ri^{d+1}$ is said to be $\fp$-Eisenstein if
\begin{align*}
a_d \not\in \fp,  \ a_0 \not\in \fp^2  \text{ and} \ a_i \in \fp \; \forall i \in \{0, \ldots, d-1\}  .
\end{align*}
In addition, $f$ is said to be Eisenstein if there exists a prime ideal $\fp$ of $\ri$ such that $f(x)$ is $\fp$-Eisenstein.

\end{definition}

 Denote by ${\Gamma_\fp}^d$ the set of all $\fp$-Eisenstein polynomials of degree $d$ and by $\Gamma^d$ the set of all Eisenstein polynomials of degree $d$. We choose \begin{equation}\label{U_p}
     U_\fp = (\fp \Zp \setminus \fp^2 \Zp) \times (\fp \Zp)^{d-1} \times (\Zp \setminus \fp \Zp) \subseteq \Zp^{d+1}
 \end{equation} and  $U_\infty=\emptyset$.  Note that we have 
 \begin{equation}\label{P-eis}
     {\Gamma_\fp}^d = (\fp  \setminus \fp^2) \times \fp^{d-1} \times (\ri \setminus \fp) = \ri^{d+1} \cap U_\fp. 
 \end{equation}

By abuse of notation we will use the same symbol for an element of $\ri$ and its image in the ring of integers $\Zp$ in the completion $\Kp$. 

\begin{corollary}\label{reg_eis} Let $K$ be a number field,  $\ri$ its ring of integers and let $d\geq 2$ be an integer. The density of the set $\Gamma^d$ of Eisenstein polynomials of degree $d$ over $\ri$ is given by
        \begin{equation} \label{densityEisenstein}
            \rho\left(\Gamma^d\right)=  1-\prod\limits_{\mathfrak{p}\in \mathcal{P}_K}\left(1-\frac{(N(\mathfrak{p})-1)^2}{N(\mathfrak{p})^{d+2}}\right),
        \end{equation}
	where $N(\mathfrak{p})= \vert \ri/\mathfrak{p} \vert = p^{\deg(\mathfrak{p})}$, and $\deg(\mathfrak{p}) = [ \ri / \mathfrak{p} : \mathbb{F}_p]$.
	\end{corollary}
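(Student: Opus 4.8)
The plan is to apply Theorem~\ref{PoonenStollNumbfields} directly to the target set $T=\Gamma^d$ with the local data $U_\fp$ from \eqref{U_p} and $U_\infty=\emptyset$. The identity \eqref{P-eis}, namely $\ri^{d+1}\cap U_\fp=\Gamma_\fp^d$, shows that an element $a\in\ri^{d+1}$ lies in $\Gamma^d$ precisely when $P(a)\neq\emptyset$ (there is no contribution from $\infty$ since $U_\infty=\emptyset$), so $(\Gamma^d)^C=P^{-1}(\{\emptyset\})$. Hence it suffices to verify the hypotheses of Theorem~\ref{PoonenStollNumbfields}, read off $\rho_\mathbb{E}(P^{-1}(\{\emptyset\}))$ from the product formula \eqref{prod formula}, and finally set $\rho(\Gamma^d)=1-\rho_\mathbb{E}(P^{-1}(\{\emptyset\}))$.

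First I would check the measure-theoretic hypotheses. Since $\fp\Zp$ and $\fp^2\Zp$ are open and closed in $\Zp$, the set $U_\fp$ is clopen in $\Zp^{d+1}$, so $\partial(U_\fp)=\emptyset$ and $\nu_\fp(\partial(U_\fp))=0$; the hypothesis on $U_\infty$ is vacuous. Next I would compute $s_\fp=\nu_\fp(U_\fp)$ as a product of Haar measures of cosets. Using $\nu_\fp(\fp^i\Zp)=N(\fp)^{-i}$, which holds because $\Zp/\fp^i\Zp\cong\ri/\fp^i$ has $N(\fp)^i$ elements, one gets
\[
s_\fp=\Big(\tfrac{1}{N(\fp)}-\tfrac{1}{N(\fp)^2}\Big)\cdot\tfrac{1}{N(\fp)^{d-1}}\cdot\Big(1-\tfrac{1}{N(\fp)}\Big)=\frac{(N(\fp)-1)^2}{N(\fp)^{d+2}},
\]
and $s_\infty=0$.

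The one step that is not a pure coset-measure computation is the tail condition \eqref{tail}, that is, $\lim_{M\to\infty}\overline{\rho}_\mathbb{E}\big(\ri^{d+1}\cap\bigcup_{p>M}\bigcup_{\fp\mid p}U_\fp\big)=0$. Here I would invoke Lemma~\ref{showdens}: writing the coordinates of $\ri^{d+1}$ as $(a_0,\dots,a_d)$, membership of $a$ in $U_\fp$ forces $a_0\equiv a_1\equiv 0\pmod{\fp}$ (this uses $d\geq 2$), so the union above is contained in $S_M(x_1,x_2)$ for the coprime polynomials $f=x_1$, $g=x_2$ in $\ri[x_1,\dots,x_{d+1}]$; Lemma~\ref{showdens} then gives $\overline{\rho}_\mathbb{E}(S_M(x_1,x_2))\to 0$, whence \eqref{tail}. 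This is the only place where a genuine density estimate enters, and it is dispatched cleanly, so I anticipate no real difficulty.

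With all hypotheses verified, Theorem~\ref{PoonenStollNumbfields} gives that $\rho_\mathbb{E}(P^{-1}(\{\emptyset\}))$ exists and equals $\prod_{\eta\in N_K}(1-s_\eta)=\prod_{\fp\in\mathcal{P}_K}\big(1-\tfrac{(N(\fp)-1)^2}{N(\fp)^{d+2}}\big)$, the product converging by part~(1) of the theorem (equivalently because $\sum_\fp s_\fp\leq\sum_\fp N(\fp)^{-d}<\infty$ for $d\geq 2$). Therefore $\rho(\Gamma^d)=1-\rho_\mathbb{E}(P^{-1}(\{\emptyset\}))$ is the claimed expression \eqref{densityEisenstein}; since $U_\infty=\emptyset$ this value is independent of the integral basis $\mathbb{E}$ (cf.\ Remark~\ref{independent}), which justifies writing $\rho(\Gamma^d)$.
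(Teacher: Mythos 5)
Your proposal is correct and follows essentially the same route as the paper's own proof: the same system $(U_\eta)$ with $U_\infty=\emptyset$, the same observation that $\partial(U_\fp)=\emptyset$, the same use of Lemma~\ref{showdens} with $f=x_1$, $g=x_2$ to verify the tail condition \eqref{tail}, and the same application of the product formula. The only differences are cosmetic (you spell out the coset-measure computation of $s_\fp$ and the basis-independence remark, which the paper leaves implicit).
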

\begin{proof}
Recall the expressions for the sets ${\Gamma_\fp}^d$ and $U_\fp$ from equations \eqref{P-eis} and \eqref{U_p}.
	With the system of sets $(U_\eta)_{\eta \in N_K}$, consider the map $P$ defined as in Theorem \ref{PoonenStollNumbfields}. 
	Note that we have \[P^{-1}(\{\emptyset\}) = \{a \in \ri^{d+1} \mid a\not\in U_\fp \ \forall \fp \in \mathcal{P}_K\},\] thus 

	\[P^{-1}(\{\emptyset\})^C = \{a \in \ri^{d+1} \mid  \ a  \in \Gamma_{\fp}^{d} \ \text{ for some } \fp \in \mathcal{P}_K\} = \Gamma^d.\]
 
In other words, the system $(U_\eta)_{\eta \in N_K}$ corresponds (as per Definition \ref{corr}) to the set $\Gamma^d$ of Eisenstein polynomials of degree $d$.
	We clearly have that $\partial(U_\mathfrak{p}) =\emptyset$. Hence, in order to apply Theorem \ref{PoonenStollNumbfields} to this system, we only have to check \eqref{tail}. 
	
	Letting $f(x_1, \dots, x_{d+1})=x_1$ and $g(x_1, \dots, x_{d+1})=x_2$, we see that 
	$$ S_M(f,g) = \bigcup\limits_{\fp \succ M}  \left( \fp \times \fp \times \ri^{d-1} \right),$$
	and $$\ri^{d+1} \bigcap \left(\bigcup\limits_{\fp  \succ M}  U_\fp\right) = \bigcup\limits_{\fp  \succ M}  \Gamma_\fp^d \subseteq S_M(f,g).$$ Thus, the application of Lemma \ref{showdens} gives Condition \eqref{tail} in this case.
	One easily computes that $$\nu_\fp(U_\fp)= \dfrac{(p^{\deg(\fp)}-1)^2}{p^{\deg(\fp)(d+2)}}.$$ 
	
	Hence, applying Theorem \ref{PoonenStollNumbfields} yields 
	\begin{align*}
	    \rho(\Gamma^d) &= 1-m(\{\emptyset\}) = 1-\prod\limits_{\eta \in \emptyset} s_\eta \prod\limits_{\eta \not\in \emptyset}(1-s_\eta) \\
	    & =1- \prod\limits_{\fp \in \mathcal{P}_K}(1-\nu_{\fp}(U_\fp)) \\
	    &= 1-\prod\limits_{\fp\in \mathcal{P}_K}\left(1-\dfrac{(p^{\deg(\fp)}-1)^2}{p^{\deg(\fp)(d+2)}}\right) \\
 &=1-\prod\limits_{\fp\in \mathcal{P}_K}\left(1-\dfrac{(N(\fp)-1)^2}{N(\fp)^{d+2}}\right). 
	\end{align*}
\end{proof}

\begin{definition}
Let $\fp$ be a non-zero prime ideal in $\ri$ and $f \in \ri[x]$ be a monic polynomial of degree $d$. We call $f$ a shifted $\fp$-Eisenstein polynomial if there exists a $b \in \ri$, such that $f(x+b)$ is $\fp$-Eisenstein. In addition, $f$ is said to be a shifted Eisenstein polynomial if there exists a prime ideal $\fp$ such that $f$ is a shifted $\fp$-Eisenstein polynomial.
\end{definition}

We will denote by $\widetilde{\Gamma}^d$ and $\widetilde{\Gamma}_\fp^d$  the set of all shifted Eisenstein polynomials, respectively shifted $\fp$-Eisenstein polynomials, of degree $d$.
We will identify elements of $\Zp^{d+1}$ with monic polynomials of degree $d$ over $\Zp$. 

For $b \in \Zp^{d+1}$, we will denote by $\sigma_b$ the following map:
\begin{align*}\sigma_b : \Zp^{d+1} & \rightarrow \Zp^{d+1}, \\ f(x) &\mapsto f(x+b). 
\end{align*}

Note that $\sigma_b$ is an automorphism with inverse $\sigma_{-b}$. It is also clearly continuous and linear. Moreover, for any $b\in \ri$ we have that $\ri^{d+1}$ is invariant under $\sigma_b$, in fact $\sigma_b(\ri^{d+1}) = \ri^{d+1}$. By abuse of notation, for $b \in \ri$ we also denote the restricted maps $ \ri^{d+1} \rightarrow \ri^{d+1}$  by $\sigma_b$.

The following two results were developed following the methods in \cite{micheli2016densityeis}.

\begin{proposition}\label{binp}
Let $f(x) \in \ri[x]$ be a polynomial of degree $d$ and $b$ denote any element of $\ri$. Suppose that $f(x)$ is $\fp$-Eisenstein. Then $f(x+b)$ is $\fp$-Eisenstein if and only if $b \in \fp$. 
\end{proposition}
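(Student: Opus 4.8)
The plan is to reduce everything to the behavior of the coefficients modulo $\fp$ and $\fp^2$. Write $f(x) = \sum_{i=0}^d a_i x^i$ with $a_d \notin \fp$, $a_i \in \fp$ for $i < d$, and $a_0 \notin \fp^2$, which is exactly the hypothesis that $f$ is $\fp$-Eisenstein. I would consider the coefficients of $g(x) := f(x+b) = \sum_{j=0}^d c_j x^j$; by the binomial theorem $c_j = \sum_{i \ge j} a_i \binom{i}{j} b^{i-j}$. The leading coefficient is unchanged, $c_d = a_d \notin \fp$, so that Eisenstein condition is automatic regardless of $b$.

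First, for the "if" direction, assume $b \in \fp$. I would show $c_j \in \fp$ for all $j < d$: in the sum $c_j = a_j + \sum_{i > j} a_i \binom{i}{j} b^{i-j}$, the term $a_j \in \fp$ (since $j < d$), and every term with $i > j$ contains a factor $b^{i-j}$ with $i - j \ge 1$, hence lies in $\fp$. Then I would check $c_0 \notin \fp^2$: here $c_0 = a_0 + \sum_{i \ge 1} a_i b^i$. We have $a_0 \notin \fp^2$ by hypothesis; and for $i \ge 1$, $a_i b^i \in \fp \cdot \fp = \fp^2$ (using $a_i \in \fp$ for $1 \le i \le d-1$, and for $i = d$, $b^d \in \fp^d \subseteq \fp^2$ since $d \ge 2$). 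So $c_0 \equiv a_0 \pmod{\fp^2}$ and $c_0 \notin \fp^2$. Thus $g = f(x+b)$ is $\fp$-Eisenstein.

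For the "only if" direction, suppose $b \notin \fp$, i.e. $b$ is a unit modulo $\fp$, and assume for contradiction that $f(x+b)$ is $\fp$-Eisenstein. Applying the already-proved "if" direction with the roles reversed: $f(x+b)$ is $\fp$-Eisenstein and we add the shift $-b$; if $-b \in \fp$ we would be done, but $-b \notin \fp$. So instead I would argue directly: we need $c_1 \in \fp$, where $c_1 = \sum_{i \ge 1} a_i \binom{i}{1} b^{i-1} = \sum_{i=1}^d i\, a_i b^{i-1}$. Modulo $\fp$, all terms with $i \le d-1$ vanish since $a_i \in \fp$, leaving $c_1 \equiv d\, a_d b^{d-1} \pmod \fp$. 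Now $a_d \notin \fp$, $b \notin \fp$ so $b^{d-1} \notin \fp$, and $d \notin \fp$ — wait, this last point is the subtle one: $d$ could be divisible by the rational prime $p$ lying under $\fp$. So the cleaner approach is to look instead at $c_0 = f(b) = \sum_{i=0}^d a_i b^i$. Modulo $\fp$: $a_0 \in \fp$ and $a_i \in \fp$ for $1 \le i \le d-1$, while $a_d b^d \not\equiv 0$ since $a_d \notin \fp$ and $b \notin \fp$. Hence $c_0 = f(b) \notin \fp$, so certainly $c_0 \notin \fp$, which violates the requirement that the constant term $c_0$ of an $\fp$-Eisenstein polynomial lie in $\fp$ (indeed in $\fp \setminus \fp^2$). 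This contradiction shows $b \in \fp$.

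The only genuine obstacle is bookkeeping: making sure the case $i = d$ is handled separately everywhere (since $a_d \notin \fp$, not $a_d \in \fp$), which is why the hypothesis $d \ge 2$ matters for the term $a_d b^d \in \fp^d \subseteq \fp^2$ in the constant-coefficient computation of the "if" direction. Everything else is a direct application of the binomial expansion together with the definition of $\fp$-Eisenstein, so I would keep the write-up short and just exhibit the relevant congruences for $c_0$ and $c_j$.
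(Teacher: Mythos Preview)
Your proposal is correct and follows essentially the same route as the paper: both expand $f(x+b)$ via the binomial theorem, obtain $c_j \equiv a_d b^{d-j} \pmod{\fp}$ for $j<d$, and read off the equivalence from there. The only cosmetic difference is that the paper handles both directions simultaneously from this congruence, whereas you split into two cases and make the use of $d\ge 2$ (to get $a_d b^d \in \fp^2$) explicit where the paper leaves it implicit.
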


\begin{proof}
We first write 
$$f(x) = a_0 +a_1x + \cdots + a_{d-1}x^{d-1} +a_d x^d. $$
For $b \in \fp$, we have that 
\begin{align*}
   f (x+b) &=a_0+ a_1(x+b) + \cdots + a_{d-1}(x+b)^{d-1} + a_d (x+b)^d = \sum_{i=0}^d a_i' x^i,
\end{align*}
where $a_i' = \sum_{j=i}^d \binom{j}{i} a_j b^{j-i}.$

Since $f$ is $\fp$-Eisenstein, we must have that $a_j \equiv 0 \mod \fp$ for all $j \in \{0, \ldots, d-1\}$, $a_d \not\equiv 0 \mod \fp$, and $a_0 \not\equiv 0 \mod \fp^2$.

It follows directly that $a_i^\prime \equiv a_d b^{d-i} \mod \fp$ for all $i \in \{0, \ldots, d-1\}$.

Thus, $f(x+b)$ is $\fp$-Eisenstein if and only if  
\begin{align*}  b^{d-i}\equiv 0 \mod \fp \; \forall \; 0\leq i < d, \; \text{and} \; a_0' = \sum_{j=0}^d a_j b^j \not\equiv 0 \mod \fp^2, 
\end{align*} which is equivalent to 
\begin{align*}  b \equiv 0 \mod \fp, \; \text{and} \; {a_0}^\prime = \sum_{j=0}^d a_j b^j  \not\equiv 0 \mod \fp^2.
\end{align*}
Note, that when $b \equiv 0 \mod \fp $, ${a_0}^\prime \equiv a_0 \not\equiv 0 \mod \fp^2$ holds by default, since $f$ is $\fp$-Eisenstein. We have thus shown that $f(x+b)$ is $\fp$-Eisenstein if and only if $b \equiv 0 \mod \fp$, as required.
\end{proof}

\begin{corollary}\label{cordisjshifted}
Let $b_1$ and $b_2$ be elements of $\ri$ such that $b_1 \not\equiv b_2 \mod \fp$. Then \begin{equation*}
    \sigma_{b_1}^{-1}(\Gamma_\fp^d) \cap \sigma_{b_2}^{-1}(\Gamma_\fp^d) = \emptyset.
\end{equation*}
\end{corollary}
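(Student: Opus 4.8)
The plan is to show that if some polynomial $f$ lies in both $\sigma_{b_1}^{-1}(\Gamma_\fp^d)$ and $\sigma_{b_2}^{-1}(\Gamma_\fp^d)$, then $b_1 \equiv b_2 \bmod \fp$, contradicting the hypothesis. So suppose $f \in \sigma_{b_1}^{-1}(\Gamma_\fp^d) \cap \sigma_{b_2}^{-1}(\Gamma_\fp^d)$. By definition this means $\sigma_{b_1}(f) = f(x+b_1)$ is $\fp$-Eisenstein and $\sigma_{b_2}(f) = f(x+b_2)$ is $\fp$-Eisenstein. I want to relate these two facts via a single shift, so I would set $g(x) = f(x+b_1)$, which is $\fp$-Eisenstein, and observe that $f(x+b_2) = g(x + (b_2 - b_1))$, using the cocycle identity $\sigma_{b_2} = \sigma_{b_2-b_1}\circ\sigma_{b_1}$ (equivalently, $\sigma_c\circ\sigma_{b} = \sigma_{b+c}$, which is immediate from $(f(x+b))(x+c) = f(x+b+c)$).

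Now $g$ is $\fp$-Eisenstein and $g(x + (b_2-b_1))$ is also $\fp$-Eisenstein, so Proposition \ref{binp} applies directly to $g$ with shift element $b_2 - b_1$: it yields $b_2 - b_1 \in \fp$, i.e. $b_1 \equiv b_2 \bmod \fp$. This contradicts the assumption $b_1 \not\equiv b_2 \bmod \fp$, so the intersection must be empty.

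The only real content is the bookkeeping with the shift maps; the identity $\sigma_c \circ \sigma_b = \sigma_{b+c}$ was essentially already noted in the excerpt (where $\sigma_b$ is called an automorphism with inverse $\sigma_{-b}$), and the substantive input — that an Eisenstein polynomial stays Eisenstein under a shift $b$ precisely when $b \in \fp$ — is exactly Proposition \ref{binp}. The one point requiring a line of care is that Proposition \ref{binp} is stated for polynomials over $\ri$ and shift elements in $\ri$; since $b_1, b_2 \in \ri$ and $f \in \ri[x]$ (so $g \in \ri[x]$ as well, $\ri^{d+1}$ being $\sigma_b$-invariant for $b \in \ri$), this causes no issue. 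I expect no genuine obstacle here — the proof is a two-line reduction to Proposition \ref{binp}.
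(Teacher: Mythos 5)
Your proof is correct and follows essentially the same route as the paper: both arguments reduce to the single-shift case via the identity $\sigma_{b_2} = \sigma_{b_2-b_1}\circ\sigma_{b_1}$ (the paper phrases this as replacing the pair $(b_1,b_2)$ by $b = b_2 - b_1$) and then invoke Proposition \ref{binp}. Nothing is missing.
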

\begin{proof}
Let $b=b_2-b_1$, then it is easy to see that the statement of the corollary is equivalent to \[ \ \Gamma_\fp^d \cap \sigma_b^{-1}(\Gamma_\fp^d) =\emptyset. \]
The claim now follows immediately from Proposition \ref{binp}.
\end{proof}

\begin{theorem}\label{shiftedchar} Let $\fp$ be a non-zero prime ideal of $\ri$. We have the following decomposition for the set $\widetilde{\Gamma}_\fp^d$ of all shifted $\fp$-Eisenstein polynomials of degree $d$:
\[\widetilde{\Gamma}_\fp^d = \bigsqcup_{b+\fp \in  \ri / \fp}  \sigma_b^{-1}\left(\Gamma_\fp^d\right), \] where $b+\fp$ is an element in $\ri/\fp$.

\end{theorem}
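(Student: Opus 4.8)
The plan is to read the statement off Proposition~\ref{binp} and Corollary~\ref{cordisjshifted}: once those are in hand the theorem is essentially a bookkeeping exercise. First I would unwind the definition. A monic $f\in\ri[x]$ of degree $d$ lies in $\widetilde{\Gamma}_\fp^d$ exactly when $f(x+b)=\sigma_b(f)$ is $\fp$-Eisenstein for some $b\in\ri$, i.e.\ when $f\in\sigma_b^{-1}(\Gamma_\fp^d)$ for some $b\in\ri$. Hence, tautologically,
\[
\widetilde{\Gamma}_\fp^d=\bigcup_{b\in\ri}\sigma_b^{-1}\!\left(\Gamma_\fp^d\right).
\]

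The first substantive step is to show that $\sigma_b^{-1}(\Gamma_\fp^d)$ depends only on the class $b+\fp\in\ri/\fp$, which collapses this a priori infinite union to a finite one indexed by $\ri/\fp$. For this, suppose $b_1\equiv b_2\bmod\fp$ and let $f\in\sigma_{b_1}^{-1}(\Gamma_\fp^d)$, so that $g(x):=f(x+b_1)$ is $\fp$-Eisenstein. Writing $f(x+b_2)=g\!\left(x+(b_2-b_1)\right)$ with $b_2-b_1\in\fp$, Proposition~\ref{binp} applied to $g$ gives that $f(x+b_2)$ is $\fp$-Eisenstein, i.e.\ $f\in\sigma_{b_2}^{-1}(\Gamma_\fp^d)$; by symmetry the two sets are equal. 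Therefore
\[
\widetilde{\Gamma}_\fp^d=\bigcup_{b+\fp\in\ri/\fp}\sigma_b^{-1}\!\left(\Gamma_\fp^d\right),
\]
a well-defined (representative-independent) finite union, since $[\ri:\fp]=N(\fp)<\infty$.

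It remains to see that the union is disjoint, which is precisely Corollary~\ref{cordisjshifted}: if $b_1\not\equiv b_2\bmod\fp$ then $\sigma_{b_1}^{-1}(\Gamma_\fp^d)\cap\sigma_{b_2}^{-1}(\Gamma_\fp^d)=\emptyset$. Combining this with the previous display yields the asserted decomposition $\widetilde{\Gamma}_\fp^d=\bigsqcup_{b+\fp\in\ri/\fp}\sigma_b^{-1}(\Gamma_\fp^d)$.

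I do not anticipate a genuine obstacle: the arithmetic core — that shifting by $b$ preserves $\fp$-Eisensteinness if and only if $b\in\fp$ — has already been isolated in Proposition~\ref{binp}, and the disjointness in Corollary~\ref{cordisjshifted}. The only point requiring a small amount of care is the coset-independence step: one should phrase it as an honest equality of subsets of the monic degree-$d$ polynomials and invoke the fact (recorded just before Proposition~\ref{binp}) that $\sigma_b$ is an automorphism with inverse $\sigma_{-b}$ and that $\sigma_b(\ri^{d+1})=\ri^{d+1}$, so that $\sigma_b^{-1}$ is unambiguous on the relevant sets.
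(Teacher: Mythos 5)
Your proposal is correct and follows essentially the same route as the paper: both reduce the tautological union over all $b\in\ri$ to a union over $\ri/\fp$ by using Proposition \ref{binp} to show $\sigma_b^{-1}(\Gamma_\fp^d)$ depends only on $b+\fp$, and both invoke Corollary \ref{cordisjshifted} for disjointness. Your explicit verification of the coset-independence via $f(x+b_2)=g(x+(b_2-b_1))$ is a slightly more detailed rendering of the same step the paper states directly.
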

\begin{proof}
The set of all shifted $\fp$-Eisenstein polynomials of degree $d$ can be written as the union \begin{equation}\label{union}
    \bigcup_{b\in\ri} \sigma_b^{-1}(\Gamma_\fp^d) = \bigsqcup_{b+\fp \in \ri /\fp} \bigcup_{c \in b+\fp } \sigma_c^{-1}(\Gamma_\fp^d), 
\end{equation}
where the fact that the first union is disjoint follows from Corollary \ref{cordisjshifted}. We also have, from Proposition \ref{binp}, that for all $c_1$ and $c_2$ satisfying $c_1 \equiv c_2 \mod \fp$, \[\sigma_{c_1}^{-1}(\Gamma_\fp^d) \cap \sigma_{c_2}^{-1}(\Gamma_\fp^d) = \sigma_{c_1}^{-1}(\Gamma_\fp^d) = \sigma_{c_2}^{-1}(\Gamma_\fp^d). \]
Thus, the latter union in  \eqref{union} is equal to any one of the sets, and we may write \[\widetilde{\Gamma}_\fp^d = \bigsqcup_{b+\fp \in  \ri / \fp}  \sigma_b^{-1}(\Gamma_\fp^d), \] where $b+\fp$ is an element in $\ri/\fp$.

\end{proof}

\begin{corollary} \label{Cor:densShifted} Let $K$ be a number field,  $\ri$ its ring of integers and let $d\geq 2$ be an integer. The density of the shifted Eisenstein polynomials of degree $d$ over $\ri$ is given by
		\[ \rho\left(\widetilde{\Gamma}^d\right)= 1-\prod\limits_{\mathfrak{p}\in \mathcal{P}_K}\left(1- \frac{(N(\mathfrak{p})-1)^2}{N(\mathfrak{p})^{d+1}}\right),\]
	where $N(\mathfrak{p})= \vert \ri/\mathfrak{p} \vert = p^{\deg(\mathfrak{p})}$, where $\deg(\mathfrak{p}) = [ \ri / \mathfrak{p} : \mathbb{F}_p]$.
	\end{corollary}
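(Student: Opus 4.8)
The plan is to run the argument of Corollary~\ref{reg_eis} for the ``shifted'' system suggested by Theorem~\ref{shiftedchar}. For $\fp\in\mathcal{P}_K$ put
\[
\widetilde{U}_\fp=\bigsqcup_{b+\fp\Zp\in\Zp/\fp\Zp}\sigma_b^{-1}(U_\fp)\subseteq\Zp^{d+1},
\]
with $U_\fp$ as in~\eqref{U_p} and $U_\infty=\emptyset$; here the binomial computation of Proposition~\ref{binp}, carried out verbatim over $\Zp$, shows that $\sigma_b^{-1}(U_\fp)$ depends only on $b$ modulo $\fp\Zp$, and the $\Zp$-version of Corollary~\ref{cordisjshifted} shows that the union is disjoint, with $N(\fp)$ terms. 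Choosing the representatives $b$ in $\ri$ and using $\sigma_b(\ri^{d+1})=\ri^{d+1}$ together with Theorem~\ref{shiftedchar} gives $\ri^{d+1}\cap\widetilde{U}_\fp=\widetilde{\Gamma}_\fp^d$, hence $P^{-1}(\{\emptyset\})^C=\bigcup_\fp\widetilde{\Gamma}_\fp^d=\widetilde{\Gamma}^d$; that is, $(\widetilde{U}_\eta)_{\eta\in N_K}$ corresponds to $\widetilde{\Gamma}^d$ in the sense of Definition~\ref{corr}.

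Next I would check the remaining hypotheses of Theorem~\ref{PoonenStollNumbfields}. Each $U_\fp$ is clopen in $\Zp^{d+1}$ (it is cut out by congruences) and $\sigma_b$ is a homeomorphism, so $\widetilde{U}_\fp$ is clopen and $\nu_\fp(\partial\widetilde{U}_\fp)=0$; the conditions on $U_\infty=\emptyset$ are vacuous. For the masses, $\sigma_b$ is a $\Zp$-linear automorphism of $\Zp^{d+1}$ whose matrix on coefficient vectors is unipotent, hence of determinant $1$, so $\sigma_b$ preserves $\nu_\fp$; combining this with the disjointness and the value $\nu_\fp(U_\fp)=\tfrac{(N(\fp)-1)^2}{N(\fp)^{d+2}}$ recorded in the proof of Corollary~\ref{reg_eis} yields
\[
s_\fp=\nu_\fp(\widetilde{U}_\fp)=N(\fp)\cdot\frac{(N(\fp)-1)^2}{N(\fp)^{d+2}}=\frac{(N(\fp)-1)^2}{N(\fp)^{d+1}}.
\]
Once condition~\eqref{tail} is established, Theorem~\ref{PoonenStollNumbfields} gives $\rho(\widetilde{\Gamma}^d)=1-m(\{\emptyset\})=1-\prod_{\fp\in\mathcal{P}_K}(1-s_\fp)$ (independent of $\mathbb{E}$ since $U_\infty=\emptyset$), which is the claimed identity.

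The step I expect to be the main obstacle is verifying~\eqref{tail}, i.e.\ that $\overline{\rho}_\mathbb{E}\big(\ri^{d+1}\cap\bigcup_{p>M}\bigcup_{\fp\mid p}\widetilde{U}_\fp\big)\to 0$ as $M\to\infty$, via Lemma~\ref{showdens}. The structural input is that if $f(x+b)$ is $\fp$-Eisenstein then all of its non-leading coefficients lie in $\fp$, so $f(x)\equiv a_d(x-b)^d\pmod\fp$ with $a_d$ a unit mod $\fp$; hence the reduction $\bar f$ is a unit times a $d$-th power of a linear polynomial, and $\bar f,\bar f'$ have a common factor of degree $d-1$. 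For $d\ge 3$ this confines $\widetilde{\Gamma}_\fp^d$, for all but finitely many $\fp$, to the mod-$\fp$ zero set of two relatively prime polynomials $F,G\in\ri[x_1,\dots,x_{d+1}]$ — concretely the coefficient relations $F=(d-1)x_d^2-2d\,x_{d+1}x_{d-1}$ and $G=\binom{d}{3}x_d^3-d^3x_{d+1}^2x_{d-2}$ obtained by expanding $a_d(x-b)^d$ and eliminating $b$ (coprime since one is linear in $x_{d-1}$ and the other in $x_{d-2}$; conceptually, they are the discriminant and the first subdiscriminant of the generic degree-$d$ polynomial, coprime because $\{\mathrm{disc}=0\}$ is irreducible and the ``multiplicity $\ge 3$'' locus is a proper subvariety of it). Then $\bigcup_{\fp\succ M}\widetilde{\Gamma}_\fp^d\subseteq S_M(F,G)$ for all large $M$, and Lemma~\ref{showdens} finishes. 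The delicate points are pinning down a genuinely coprime pair $F,G$ and checking the containment outside finitely many $\fp$; the awkward case is $d=2$, where the ``perfect square'' locus is the single hypersurface $\{\mathrm{disc}=0\}$ and no coprime pair is available — but there $s_\fp\asymp N(\fp)^{-1}$, so $\sum_\fp s_\fp=\infty$, the product above is $0$, and the assertion reduces to $\rho(\widetilde{\Gamma}^2)=1$, which I would prove directly by noting that the complement of $\widetilde{\Gamma}^2$ lies, off a density-zero set, in the locus where $\mathrm{disc}$ is squarefull.
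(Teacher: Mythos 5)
Your construction of the system $\widetilde{U}_\fp$, the measure computation $s_\fp=N(\fp)\,\nu_\fp(U_\fp)=\frac{(N(\fp)-1)^2}{N(\fp)^{d+1}}$, and the verification of \eqref{tail} for $d\geq 3$ are all sound and essentially follow the paper's route. Your pair $F=(d-1)x_d^2-2d\,x_{d+1}x_{d-1}$, $G=\binom{d}{3}x_d^3-d^3x_{d+1}^2x_{d-2}$ is a legitimate alternative to the paper's choice (the paper instead uses relations among the products $a_j'a_{d-j}'$); both follow from $f\equiv a_d(x-b)^d \bmod \fp$, and your coprimality argument works. Note also that the congruences $F\equiv G\equiv 0 \bmod \fp$ hold for \emph{every} $\fp$, so the hedge ``for all but finitely many $\fp$'' is unnecessary.

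The genuine gap is the case $d=2$. You correctly observe that the claimed identity reduces to $\rho(\widetilde{\Gamma}^2)=1$, but the proposed proof --- that the complement lies, off a density-zero set, in the locus where $\mathrm{disc}(f)$ is squarefull --- leaves two substantial steps unproved. First, the reduction is not quite correct as stated: if $\fp\mid\mathrm{disc}(f)$ but $\fp\mid 2a_2$, then $f$ can fail to be shifted $\fp$-Eisenstein without forcing $\fp^2\mid\mathrm{disc}(f)$, so you only conclude that $\mathrm{disc}(f)$ is squarefull away from primes dividing $2a_2$, and those exceptional primes must be controlled separately. Second, and more seriously, none of the tools available here (in particular Lemma \ref{showdens}, which only handles simultaneous vanishing of two coprime polynomials) yields density zero for a ``squarefull value'' condition; establishing that the squarefull-discriminant locus in $\ri^3$ has density zero is a separate counting estimate (a divisor-bound argument over $\mathbb{Z}$, with extra work over a general $\ri$) that you would have to supply from scratch. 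The paper sidesteps all of this with a monotonicity/truncation trick: apply Theorem \ref{PoenenStollNumbfields} --- more precisely, Theorem \ref{PoonenStollNumbfields} --- to the finite system $V_\fp=\widetilde{U}_\fp$ for $\fp\preceq M$ and $V_\fp=\emptyset$ otherwise (for which \eqref{tail} is trivial), deduce $\underline{\rho}_\mathbb{E}(\widetilde{\Gamma}^2)\geq 1-\prod_{\fp\preceq M}\left(1-\frac{(N(\fp)-1)^2}{N(\fp)^3}\right)$, and let $M\rightarrow\infty$ using the divergence of $\sum_\fp N(\fp)^{-1}$. I recommend replacing your $d=2$ argument with that truncation argument.
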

\begin{proof} We have, 
\[\widetilde{\Gamma}_{\fp}^{d} = \bigsqcup_{b+\fp \in  \ri / \fp}  \sigma_b^{-1}\left((\mathfrak{p} \setminus \mathfrak{p}^2 )\times \mathfrak{p}^{d-1} \times \ri \setminus \mathfrak{p}\right) = \bigsqcup_{b+\fp \in  \ri / \fp}  \sigma_b^{-1}\left({\Gamma_\fp}^d\right) , \] where $b+\fp$ is an element in $\ri/\fp$. 
Let $U_\fp$ be defined as before.

We set 
\begin{equation*}
     \widetilde{U}_\fp =  \bigsqcup\limits_{b+\fp \in  \ri / \fp} \sigma_b^{-1} \left(U_\fp\right) \subset {\Zp}^{d+1},
\end{equation*}
 and $\widetilde{U}_\infty = \emptyset$.

As noted before, $\sigma_b(\ri^{d+1})= \ri^{d+1}$ for any $b\in \ri$ and thus, we have $\widetilde{\Gamma}_\fp^d = \ri^{d+1} \cap \widetilde{U}_\fp$.
Now, with the system of sets $\widetilde{U}_\mathfrak{p}$ and the map $P$ defined as in Theorem \ref{PoonenStollNumbfields}, we have, as before, that $P^{-1}(\{\emptyset\})^C$ is the set $\widetilde{\Gamma}^d$ of shifted Eisenstein polynomials of degree $d$. Since $\Zp$ is compact, and ${\sigma_b}$ is a surjective and continuous endomorphism of $\Zp^{d+1}$, we have that for any $b \in \ri$, $\sigma_b$ preserves the Haar measure. Thus, \[\nu_{\fp}(\widetilde{U}_\fp) = \nu_\fp\left(\bigsqcup_{b+\fp \in  \ri / \fp}  \sigma_b^{-1}\left(U_\fp\right)\right) = \left \lvert {\ri}/{\fp}\right \rvert \nu_{\fp}({U}_\fp)= p^{\deg(\fp)}\cdot \nu_{\fp}({U}_\fp).\]

We first deal with the case $d\geq 3$ and verify condition \eqref{tail} for the system $(\widetilde{U}_\eta)_{\eta \in N_K}$. For this, we define polynomials $F, G \in \ri[x_0, \ldots, x_{d}]$ as \begin{align*}
    F(x_0, \ldots, x_{d})& = d^2 x_0 x_{d} -  x_1 x_{d-1},\\
    G(x_0, \ldots, x_{d})& = x_2 x_{d-2} - \binom{d}{2}^2 x_0 x_{d}.
\end{align*} Clearly, these are coprime for any value of $d\geq 3$. We claim that 
\begin{equation} \label{inclusion}
    \ri^{d+1} \bigcap \left(\bigcup\limits_{\fp  \succ M}  \widetilde{U}_\fp\right) = \bigcup\limits_{\fp  \succ M}  \widetilde{\Gamma}_\fp^d \subseteq S_M(F,G).
\end{equation}

Let $ a^\prime=(a_0^\prime, a_1^\prime, \ldots, a_{d}^\prime) \in \widetilde{\Gamma}_\fp^d$ for some $\fp \succ M$. Then, there exists $b \in \ri$ such that $\sigma_{-b}(a^\prime)=a=(a_0, \dots, a_d)$ is $\fp$-Eisenstein. 
This is equivalent to saying $a'=\sigma_b(a)$ and hence we have the relations $a_j^\prime = \sum_{i=j}^d \binom{i}{j}a_i b^{i-j}$, $a_i \equiv 0 \mod \fp$ for $i\in \{0, \dots, d-1\}$ and $a_d \not \in \fp$. Thus, $a_j^\prime =  \binom{d}{j}a_d b^{d-j} \mod \fp$.  Clearly,  $a_j^\prime a_{d-j}^\prime \equiv \binom{d}{j}^2 a_d^2 b^d \mod \fp$, and in particular \begin{align*} 
a_0^\prime a_{d}^\prime  &\equiv  a_d^2 b^d  \mod \fp, \\ 
a_1^\prime a_{d-1}^\prime & \equiv  a_d^2 b^d d^2 \mod \fp, \\ a_2^\prime a_{d-2}^\prime & \equiv  a_d^2 b^d \binom{d}{2}^2 \mod \fp.
\end{align*} Therefore, we get
$$F(a^\prime) \equiv G(a^\prime) \equiv 0 \mod \fp$$
or in other words $a^\prime \in S_M(F,G)$. This proves \eqref{inclusion}.

Finally, from Lemma \ref{showdens} we have 		\begin{equation*}
			\lim_{M \rightarrow \infty} \bar{\rho}_\mathbb{E}(S_M(F,G)) = 0 
			\end{equation*}
and thus
\begin{equation*}
			\lim_{M \rightarrow \infty} \bar{\rho}_\mathbb{E}\left(\ri^{d+1} \bigcap \left(\bigcup\limits_{\fp  \succ M}  \widetilde{U}_\fp\right)\right) = 0. 
\end{equation*}

 Thus, applying Theorem \ref{PoonenStollNumbfields} and using \eqref{U_p}, we get
 \begin{align*}
	    \rho\left(\widetilde{\Gamma}^d\right) = 1-m(\{\emptyset\}) 
	   &= 1-\prod\limits_{\fp \in \mathcal{P}_K}(1-\nu_{\fp}(\widetilde{U}_\fp)) \\
	    &= 1-\prod\limits_{\fp\in \mathcal{P}_K}\left(1-p^{\deg(\fp)}\cdot\dfrac{(p^{\deg(\fp)}-1)^2}{p^{\deg(\fp)(d+2)}}\right) \\
 &=1-\prod\limits_{\fp\in \mathcal{P}_K}\left(1- \dfrac{(N(\fp)-1)^2}{N(\fp)^{d+1}}\right).
 \end{align*}

Now we turn to the case $d=2$. We use the same strategy as in \cite[Prop. 10]{micheli2016densityeis} and fix some positive integer $M$ and some integral basis $\mathbb{E}$ of $\ri$. Then we consider the system
\begin{align*}
    V_\mathfrak{p} = \begin{cases} \widetilde{U}_\mathfrak{p},& \mathfrak{p}\preceq M,\\ \emptyset,& \text{else}, \end{cases}
\end{align*}
and $V_\infty = \emptyset$. The system $(V_\eta)_{\eta\in N_K}$ clearly satisfies the conditions of Theorem \ref{PoonenStollNumbfields} and thus
\begin{align} 
    1\geq \underline{\rho}_\mathbb{E}(\widetilde{\Gamma}^2) 
    \geq \underline{\rho}_\mathbb{E}\left( \bigcup_{\eta \in N_K} V_\eta \cap \ri^{3} \right) 
    &= 1 - \prod_{\mathfrak{p}\in \mathcal{P}_K, \ \mathfrak{p}\preceq M} \left( 1 - \frac{(N(\mathfrak{p})-1)^2}{N(\mathfrak{p})^3} \right) \nonumber \\
    &\geq 1 - \prod_{\mathfrak{p}\in \mathcal{P}_K, \ \mathfrak{p}\preceq M} \left( 1 - \frac{1}{2N(\mathfrak{p})} \right).   \label{densM}
\end{align}
By \cite[Ch. VIII, Theorem 6]{lang2013algebraic} the series $\sum_{\mathfrak{p}\in \mathcal{P}_K} N(\mathfrak{p})^{-1}$ diverges to infinity. Hence, the product  of \eqref{densM} goes to zero for $M \rightarrow \infty$. Thus, we get 
$$\rho(\widetilde{\Gamma}^2)=1 = 1- \prod_{\mathfrak{p}\in \mathcal{P}_K} \left(1- \frac{(N(\mathfrak{p})-1)^2}{N(\mathfrak{p})^{3}} \right).$$

\end{proof}

 \subsection{Computations of Higher Moments}
	In this section, we will apply Theorem \ref{Thm:highermoments} to compute the expected value and the variance of non-zero prime ideals $\mathfrak{p} \subset \ri$ over a general number field $K$ such that a polynomial of degree $d$ is $\mathfrak{p}$-Eisenstein. A minor modification yields the same claim for shifted Eisenstein polynomials.
 
	\begin{corollary}\label{cor:eisenstein} Let $d\geq 2$ be an integer and let $K$ be number field and $\Gamma^d$ be the Eisenstein polynomials of degree $d$. We associate the system $U_\mathfrak{p}= (\mathfrak{p}\Zp \setminus \mathfrak{p}^2 \Zp) \times (\mathfrak{p} \Zp)^{d-1} \times (\Zp \setminus \mathfrak{p} \Zp)$ and $U_\infty= \emptyset$. This system satifies the conditions of Theorem~\ref{Thm:highermoments} for any $n\in \mathbb{N}$.
	
	In particular, we have 
	\begin{align*}
	    \rho\left(\Gamma^d\right) &= \left(1-\prod\limits_{\mathfrak{p}\in \mathcal{P}_K}\left(1-\frac{(N(\mathfrak{p})-1)^2}{N(\mathfrak{p})^{d+2}}\right)\right), \\
	    \mu &= \sum\limits_{\mathfrak{p}\in \mathcal{P}_K}\frac{(N(\mathfrak{p})-1)^2}{N(\mathfrak{p})^{d+2}}, 
	    \quad \mu_{\Gamma^d} = \rho (\Gamma^d)^{-1} \mu ,
	\end{align*}
	where $N(\mathfrak{p})= \vert \ri/\mathfrak{p} \vert = p^{\deg(\mathfrak{p})}$ and $\deg(\mathfrak{p}) = [ \ri / \mathfrak{p} : \mathbb{F}_p]$. 
	 
	Furthermore, the restricted variance is given by
	\begin{align*}
	   \sigma^2_{\Gamma^d} = \frac{1}{\rho(\Gamma^d)}\left( \mu^2 - \sum\limits_{\eta \in \mathcal{P}_K} \left(1-\frac{(N(\mathfrak{p})-1)^2}{N(\mathfrak{p})^{d+2}}\right)^2 + \mu\right) - \frac{2\mu_{\Gamma^d}}{\rho(\Gamma^d)} \mu + \mu_{\Gamma^d}^2.
	\end{align*}
	\end{corollary}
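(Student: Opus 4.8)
The plan is to show that the system $(U_\eta)_{\eta\in N_K}$ with $U_\fp$ as in \eqref{U_p} and $U_\infty=\emptyset$ meets all hypotheses of Theorem~\ref{Thm:highermoments}, and then to read off the four quantities from formula \eqref{mean}, Lemma~\ref{passtoT} and the Remark following Theorem~\ref{Thm:highermoments}. Most of the groundwork is already in Corollary~\ref{reg_eis}: that system corresponds to $\Gamma^d$ in the sense of Definition~\ref{corr}, one has $\partial(U_\fp)=\emptyset$, and condition \eqref{densitycond}---which is literally \eqref{tail}---was verified there by applying Lemma~\ref{showdens} with $f=x_1$ and $g=x_2$. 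Hence the only genuinely new conditions to check are \eqref{newcond}, \eqref{newcond2} and \eqref{newcond3}.

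For \eqref{newcond} I would take $\alpha=k$. Fix $\mathbb{E}$; there is a constant $C_\mathbb{E}$ with $\card{\ri/(a_0)}\le C_\mathbb{E}H^k$ for every non-zero $a_0\in O(H,\mathbb{E})$, because the absolute norm is a homogeneous polynomial of degree $k$ in the $\mathbb{E}$-coordinates. Put $c'=\lceil C_\mathbb{E}\rceil$ and $c=1$. If $A=(a_0,\dots,a_d)\in U_\fp\cap O(H,\mathbb{E})_I^{d+1}$, then $a_0\in\fp\Zp\setminus\fp^2\Zp$, so $a_0\neq 0$ and $(a_0)\subseteq\fp$, whence $N(\fp)\le\card{\ri/(a_0)}\le c'H^k$; in particular the rational prime below $\fp$ is at most $c'H^k$. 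Therefore no $\fp\succ c'H^k$ can occur, i.e.\ the cardinality in \eqref{newcond} equals $0<c$.

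For \eqref{newcond2} and \eqref{newcond3} I would put $v_\fp=C\,N(\fp)^{-d}$, with $C=C(K,\mathbb{E},d)$ the constant from the uniform estimate
\[
\left\lvert\,O(H,\mathbb{E})_I^{d+1}\cap\bigcap_{l=1}^{m}U_{\fp_l}\,\right\rvert\ \le\ C\,(2H)^{k(d+1)}\prod_{l=1}^{m}N(\fp_l)^{-d}\qquad(\fp_1,\dots,\fp_m\text{ pairwise distinct}),
\]
from which \eqref{newcond2} follows by applying the estimate to the distinct ideals among $\fp_1,\dots,\fp_n$ (repeated indices leave the intersection unchanged). To prove the displayed bound: a point of the intersection has $a_0\in(\prod_l\fp_l)\setminus\{0\}$, so $\prod_lN(\fp_l)=\card{\ri/\prod_l\fp_l}\le\card{\ri/(a_0)}\le C_\mathbb{E}H^k$, and the intersection is empty once $\prod_lN(\fp_l)>C_\mathbb{E}H^k$. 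In the complementary range one counts lattice points: any ideal $\mathfrak{a}$, viewed through $\mathbb{E}$ as a sublattice of $\mathbb{Z}^k$ of covolume $\card{\ri/\mathfrak{a}}$, has all successive minima $\asymp\card{\ri/\mathfrak{a}}^{1/k}$ (a non-zero $\beta\in\mathfrak{a}$ satisfies $\card{\ri/\mathfrak{a}}\le\card{\ri/(\beta)}\ll\lVert\beta\rVert_\infty^{k}$, which bounds the first minimum from below, and Minkowski's second theorem then bounds the last), so $\card{\mathfrak{a}\cap O(H,\mathbb{E})}\ll_{\mathbb{E}}\max(1,(2H)^k/\card{\ri/\mathfrak{a}})$; applying this to $\mathfrak{a}=\prod_l\fp_l$ for the coordinates $a_0,\dots,a_{d-1}$ and bounding $a_d$ trivially by $(2H)^k$ gives the claim. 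Finally $\sum_{\fp\in\mathcal{P}_K}v_\fp=C\sum_{\fp}N(\fp)^{-d}\le Ck\sum_{p\in\mathcal{P}}p^{-d}<\infty$ as $d\ge 2$, which is \eqref{newcond3}; this is the point where the hypothesis $d\ge 2$ is needed.

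With all hypotheses verified, Theorem~\ref{Thm:highermoments} applies for every $n$. The density is Corollary~\ref{reg_eis}. Taking $n=1$ in \eqref{mean} (the only admissible partition datum is $\tau=(1)$, with $\ell(\tau)=c(\tau)=1$) gives $\mu=\mu_{1,\mathbb{E}}=\sum_{\eta\in N_K}s_\eta=\sum_{\fp\in\mathcal{P}_K}\nu_\fp(U_\fp)=\sum_{\fp}(N(\fp)-1)^2/N(\fp)^{d+2}$, since $s_\infty=0$ because $U_\infty=\emptyset$. Because $(\Gamma^d)^C=P^{-1}(\{\emptyset\})$ (Corollary~\ref{reg_eis}) and $\rho(\Gamma^d)=1-\prod_{\fp}(1-\nu_\fp(U_\fp))>0$ (the infinite product converges to a positive real, as $\sum_{\fp}\nu_\fp(U_\fp)<\infty$), Lemma~\ref{passtoT} yields $\mu_{\Gamma^d}=\rho(\Gamma^d)^{-1}\mu$, and the restricted variance is obtained by substituting these data together with $s_\eta=\nu_\eta(U_\eta)$ into the formula for $\sigma^2_T$ in the Remark after Theorem~\ref{Thm:highermoments}. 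The one step I expect to be genuinely delicate is the uniform lattice-point estimate above: making it hold simultaneously for all sizes of $\prod_lN(\fp_l)$ relative to $H$, and pinning down the well-balancedness of ideal-lattices; the remainder is bookkeeping.
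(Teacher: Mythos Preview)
Your proof is correct and follows the paper's overall plan, but the two key verifications differ in instructive ways. For \eqref{newcond} you use the constant term: $a_0\in\fp\setminus\{0\}$ gives $N(\fp)\le\card{N_{K/\mathbb{Q}}(a_0)}\ll H^k$, so the set in \eqref{newcond} is empty once $\fp\succ c'H^k$. The paper instead uses the discriminant of $f_A$: since $f_A$ is Eisenstein it is irreducible, hence $\mathrm{disc}(f_A)\neq 0$, and every relevant $\fp$ divides it; bounding $\card{N_{K/\mathbb{Q}}(\mathrm{disc}(f_A))}\ll H^{(2d-2)k}$ then gives \eqref{newcond} for every $\alpha>0$. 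Your argument is more economical here, but the paper's choice is deliberate: the discriminant is shift-invariant, so the identical argument carries over verbatim to Corollary~\ref{cor:shiftEisenstein} on shifted Eisenstein polynomials, whereas your constant-term trick does not survive the shift. For \eqref{newcond2} you exploit the well-balancedness of ideal lattices (all successive minima $\asymp[\ri:\mathfrak{a}]^{1/k}$), obtaining the uniform count $\card{\mathfrak{a}\cap O(H,\mathbb{E})}\ll\max\bigl(1,(2H)^k/[\ri:\mathfrak{a}]\bigr)$ with no restriction on the primes; the paper uses only the cruder bound $\mathrm{diam}(\varphi(\mathfrak{a}))\le[\ri:\mathfrak{a}]$ from Minkowski and then forces $\mathrm{diam}\le H$ by choosing $\alpha=1/(nk)$. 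Your route thus avoids the $n$-dependent tuning of $\alpha$ and is exactly the sharper input you flagged as ``delicate''---in fact it is standard and works as you sketch.

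One small caveat: the sentence ``from which \eqref{newcond2} follows by applying the estimate to the distinct ideals'' is not literally correct, since passing to the distinct primes gives an upper bound $\prod_{l}v_{\fq_l}$ that is \emph{larger} than the required $\prod_j v_{\fp_j}$ when repetitions occur. This is cosmetic---the paper's verification has the same gap, and inspection of the proof of Theorem~\ref{Thm:highermoments} shows that only the distinct-tuple bound is ever really used after grouping by underlying sets---but the wording should be adjusted.
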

	\begin{proof}

We start by noting that $\rho(\Gamma^d)$ was already computed in \eqref{densityEisenstein}.

As $\ri$ is a Dedekind domain, we get that the intersection of infinitely many prime ideals in $\ri$ is the zero ideal. In particular, we have that the intersection of infinitely many ideals of the form $\mathfrak{p} \setminus \mathfrak{p}^2$ must be the empty set and hence, $I=\emptyset$ and $O(H,\mathbb{E})_I = O(H, \mathbb{E})$.

We need to check the assumptions of Theorem \ref{Thm:highermoments}. Condition \eqref{densitycond}   follows directly from Lemma \ref{showdens} applied to the polynomials $f(x_0, \dots, x_d)=x_0$ and $g(x_0, \dots, x_d)=x_1$.

Next we show that \eqref{newcond} is satisfied for any $\alpha>0$. Let $A=(A_0, \dots, A_d)\in \ri^{d+1}$, then we define $f_A(x)= \sum_{j=0}^d A_{d-j} x^j$ and we denote by $\operatorname{disc}(f_A)$ the discriminant of $f_A$. If $\{ \mathfrak{p} \in \mathcal{P}_K \ \mid \ A \in U_\mathfrak{p} \} = \emptyset$, then \eqref{newcond} is trivially satisfied. Thus, we will now assume that $\{ \mathfrak{p} \in \mathcal{P}_K \ \mid \ A \in U_\mathfrak{p} \} \neq \emptyset$.

As the discriminant is the Sylvester matrix of the resultant of $f_A$ and $f_A'$, we get (as $A_d$ will always be multiplied by some $A_j$ with $j\in \{0, \dots, d-1\}$)
\begin{align*}
    \operatorname{disc}(f_A) \in \bigcap_{\mathfrak{p}\in \mathcal{P}_K, A \in U_\mathfrak{p}} \mathfrak{p}.
\end{align*}
Combining this with the observation that $\operatorname{disc}(f_A)\neq 0$ as $f_A$ is irreducible (as $A$ is contained in some $U_\mathfrak{p}$, $f_A$ satisfies the criterion of Eisenstein), we get
\begin{equation} \label{low}
\begin{split}
    \vert N_{K/\mathbb{Q}}(\operatorname{disc}(f_A)) \vert 
    &=  N(\langle\operatorname{disc}(f_A)\rangle_{\ri})  \\
    &\geq \prod_{\mathfrak{p}\in \mathcal{P}_K, \ A \in U_\mathfrak{p}} \vert N(\mathfrak{p}) \vert 
    \geq H^{\alpha \vert \{ \mathfrak{p}\in \mathcal{P}_K \ \mid \ \mathfrak{p}\succeq H^\alpha, A \in U_\mathfrak{p} \} \vert },
\end{split}
\end{equation}
where we denote by $N$ the absolute norm and $N_{K/\mathbb{Q}}$ the ideal norm on $\ri$. 

On the other hand, using that the resultant is a homogeneous polynomial of degree $2d-2$ and $N_{K/\mathbb{Q}}(\operatorname{disc}(f_A)) = \operatorname{det}_\mathbb{Z}(\ri \rightarrow \ri, x \mapsto \operatorname{disc}(f_A)x)$, there exists a constant $c>0$, depending only on $\mathbb{E}$, such that for all $H\geq 1$ and all $A\in \ri^{d+1}$ holds 
\begin{equation} \label{high}
    \vert N_{K/\mathbb{Q}}(\operatorname{disc}(f_A)) \vert \leq c H^{(2d-2)k},
\end{equation}
where $k=[K:\mathbb{Q}]$.
Thus, combining \eqref{low} and \eqref{high} and taking the logarithm, we obtain for $H\geq 1$
\begin{align*}
    \alpha \vert \{ \mathfrak{p}\in \mathcal{P}_K \ \mid \ \mathfrak{p}\succeq H^\alpha, A \in U_\mathfrak{p} \} \vert
    \leq \ln(c) + (2d-2) k.
\end{align*}
Hence, Condition \eqref{newcond} holds for any choice of $\alpha>0$.

Now we check Conditions \eqref{newcond2} and \eqref{newcond3}. Let $\mathbb{E}$ be an integral basis of $\ri$, $k=[K:\mathbb{Q}]$ and $\varphi: \ri \rightarrow \mathbb{Z}^k$ be the isomorphism of $\mathbb{Z}$-modules induced by the basis $\mathbb{E}$. We get
\begin{align*}
    \frac{ \left\vert \bigcap_{j=1}^n U_{\mathfrak{p}_j} \cap O(H, \mathbb{E})^{d+1} \right\vert }{ (2H)^{(d+1)k}}
\leq \left( \frac{ \left\vert \bigcap_{j=1}^n \mathfrak{p}_j \cap O(H, \mathbb{E}) \right\vert}{(2H)^k} \right)^d.
\end{align*}
Hence, we are only interested in ideals of the form $ \bigcap_{j=1}^n \mathfrak{p}_j$. In fact, as $\varphi$ preserves densities of lattices, it is enough to consider its image in $\mathbb{Z}^k$ under $\varphi$. By a similar argument as for \cite[Proposition 1]{baake2000diffraction} one can show that for any $n\in \mathbb{N}$ there exists a universal constant $c>0$ such that for any lattice $\Gamma \subseteq \mathbb{R}^n$ of full rank and all $H\in \mathbb{N}_{\geq 1}$ holds
\begin{equation*} 
  	        \left\vert \frac{\vert \Gamma \cap [-H, H)^n\vert}{(2H)^n} - \rho(\Gamma) \right\vert
 	        \leq c \rho(\Gamma) \left( \frac{\operatorname{diam}(\Gamma)}{H} + \left( \frac{\operatorname{diam}(\Gamma)}{H} \right)^2 \right),
  	    \end{equation*}
  	   where $\operatorname{diam}(\Gamma)$ denotes the diameter of the fundamental domain of $\Gamma$.

Hence, if $\operatorname{diam}( \varphi( \bigcap_{j=1}^n \mathfrak{p}_j)) \leq cH$, then we can estimate 

\begin{equation} \label{auxEisenstein}
\begin{split}
      &\frac{\left\vert \bigcap_{j=1}^n \mathfrak{p}_j \cap O(H, \mathbb{E}) \right\vert}{(2H)^k} 
=  \frac{\left\vert \varphi \left(\bigcap_{j=1}^n \mathfrak{p}_j\right) \cap [-H,H)^k \right\vert}{(2H)^k} 
\leq  c' \rho\left( \varphi \left(\bigcap_{j=1}^n \mathfrak{p}_j \right)\right) \\
&= c' \rho \left( \bigcap_{j=1}^n \mathfrak{p}_j \right) 
= c' \prod_{j=1}^n [\ri : \mathfrak{p}_j]^{-1}
= c' \prod_{j=1}^n p_j^{-\operatorname{deg}(\mathfrak{p}_j)}
\leq c'  \prod_{j=1}^n \frac{1}{p_j}.
\end{split}
\end{equation}
Thus, for $d\geq 2$ we can pick $v_\mathfrak{p} = (1+c')/p^d$ and the series converges. All we need to show is that there exists some $\alpha>0$ and some universal constant $C>0$ such that for $\mathfrak{p}_1, \dots, \mathfrak{p}_n \preceq H^\alpha$ holds
\begin{align*}
    \operatorname{diam}\left( \varphi \left(\bigcap_{j=1}^n \mathfrak{p}_j \right)\right) \leq C H.
\end{align*}
However, by Minkowski's Second Theorem \cite[Chapter VIII.2, Theorem 1]{cassels2012introduction} we have for any ideal $\mathfrak{a} \in \ri$  
\begin{align*}
   \operatorname{diam}(\varphi(\mathfrak{a}))\leq  \rho(\varphi(\mathfrak{a}))^{-1}
   = \rho(\mathfrak{a})^{-1}
   = [\ri : \mathfrak{a}].
\end{align*}
By the Chinese Remainder Theorem we get
\begin{align*}
    \operatorname{diam}\left(\varphi\left( \bigcap_{j=1}^n \mathfrak{p}_j\right) \right)
\leq [\ri :   \bigcap_{j=1}^n \mathfrak{p}_j]
=  \prod_{j=1}^n [\ri : \mathfrak{p}_j]
\leq  \prod_{j=1}^n p_j^k,
\end{align*}
Hence, if  $\mathfrak{p}_1, \dots, \mathfrak{p}_n \preceq H^\alpha$, then we get
\begin{align*}
    \operatorname{diam}\left( \bigcap_{j=1}^n \mathfrak{p}_j \right)
\leq  H^{\alpha n k}.
\end{align*}

Choosing $\alpha = 1/(nk)$, we obtain
\begin{align*}
    \operatorname{diam}\left( \bigcap_{j=1}^n \mathfrak{p}_j \right)
\leq  H.
\end{align*}

\end{proof}

The argument of Corollary \ref{cor:eisenstein} can be generalized to shifted Eisenstein polynomials as well.

\begin{corollary} \label{cor:shiftEisenstein}
Let $d\geq 3$ be an integer and let $K$ be number field and $\widetilde{\Gamma}^d$ be the shifted Eisenstein polynomials of degree $d$. We associate the system 
$$\widetilde{U}_\mathfrak{p}= \bigcup_{b+\mathfrak{p}\in \ri /\mathfrak{p}}\sigma_b^{-1}\left( (\mathfrak{p}\Zp \setminus \mathfrak{p}^2 \Zp) \times (\mathfrak{p} \Zp)^{d-1} \times (\Zp \setminus \mathfrak{p} \Zp)\right)$$
and $\widetilde{U}_\infty= \emptyset$. 
Then the system $(\widetilde{U}_\mathfrak{p})_{\mathfrak{p}\in N_K}$ satisfies the conditions of Theorem~\ref{Thm:highermoments} for any $n\in \mathbb{N}$.
	
	In particular, we have 
	\begin{align*}
	    \rho(\widetilde{\Gamma}^d) &= \left(1-\prod\limits_{\mathfrak{p}\in \mathcal{P}_K}\left(1-\frac{(N(\mathfrak{p})-1)^2}{N(\mathfrak{p})^{d+1}}\right)\right), \\
	    \mu &= \sum\limits_{\mathfrak{p}\in \mathcal{P}_K}\frac{(N(\mathfrak{p})-1)^2}{N(\mathfrak{p})^{d+1}}, 
	    \quad \mu_{\widetilde{\Gamma}^d} = \rho (\widetilde{\Gamma}^d)^{-1} \mu ,
	\end{align*}
	where $N(\mathfrak{p})= \vert \ri/\mathfrak{p} \vert = p^{\deg(\mathfrak{p})}$ and $\deg(\mathfrak{p}) = [ \ri / \mathfrak{p} : \mathbb{F}_p]$. 
	 
	Furthermore, the restricted variance is given by
	\begin{align*}
	   \sigma^2_{\widetilde{\Gamma}^d} = \frac{1}{\rho(\widetilde{\Gamma}^d)}\left( \mu^2 - \sum\limits_{\eta \in \mathcal{P}_K} \left(1-\frac{(N(\mathfrak{p})-1)^2}{N(\mathfrak{p})^{d+1}}\right)^2 + \mu\right) - \frac{2\mu_{\widetilde{\Gamma}^d}}{\rho(\widetilde{\Gamma}^d)} \mu + \mu_{\widetilde{\Gamma}^d}^2.
	\end{align*} 
\end{corollary}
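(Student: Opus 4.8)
The plan is to mimic the proof of Corollary~\ref{cor:eisenstein} essentially verbatim, the genuinely new inputs being the verification of Condition~\eqref{densitycond} (which needs $d\geq 3$), the translation-invariance of the discriminant for Condition~\eqref{newcond}, and the sublattice bookkeeping for Conditions~\eqref{newcond2}--\eqref{newcond3} (which again needs $d\geq 3$). First I would record what is already available: the value of $\rho(\widetilde{\Gamma}^d)$, the identity $\widetilde{U}_{\mathfrak{p}}\cap\ri^{d+1}=\widetilde{\Gamma}_{\mathfrak{p}}^d$, and the value $\nu_{\mathfrak{p}}(\widetilde{U}_{\mathfrak{p}})=N(\mathfrak{p})\nu_{\mathfrak{p}}(U_{\mathfrak{p}})=(N(\mathfrak{p})-1)^2/N(\mathfrak{p})^{d+1}$ are all contained in Corollary~\ref{Cor:densShifted}; moreover $I=\emptyset$ and hence $O(H,\mathbb{E})_I=O(H,\mathbb{E})$, because any $A\in\widetilde{U}_{\mathfrak{p}}$ represents a shifted $\mathfrak{p}$-Eisenstein, hence irreducible, polynomial, whose discriminant is a nonzero element of the Dedekind ring $\ri$ lying in $\mathfrak{p}$, and a nonzero element of $\ri$ lies in only finitely many prime ideals. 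Once the hypotheses of Theorem~\ref{Thm:highermoments} are verified for every $n$, the displayed value of $\mu$ is the $n=1$ instance of \eqref{mean}, the identity $\mu_{\widetilde{\Gamma}^d}=\rho(\widetilde{\Gamma}^d)^{-1}\mu$ follows from Lemma~\ref{passtoT} (applicable since $(\widetilde{\Gamma}^d)^C=P^{-1}(\{\emptyset\})$ and $\rho(\widetilde{\Gamma}^d)\in(0,1)$ for $d\geq 3$), and the formula for $\sigma^2_{\widetilde{\Gamma}^d}$ is the specialization of the variance formula of the Remark after Theorem~\ref{Thm:highermoments} with $s_{\mathfrak{p}}=(N(\mathfrak{p})-1)^2/N(\mathfrak{p})^{d+1}$. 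So everything reduces to verifying \eqref{densitycond}, \eqref{newcond}, \eqref{newcond2} and \eqref{newcond3}.

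For \eqref{densitycond} I would reuse the computation already carried out inside the proof of Corollary~\ref{Cor:densShifted}: with $F=d^2x_0x_d-x_1x_{d-1}$ and $G=x_2x_{d-2}-\binom{d}{2}^2x_0x_d$, which are coprime precisely because $d\geq 3$, one has $\ri^{d+1}\cap\bigcup_{\mathfrak{p}\succ M}\widetilde{U}_{\mathfrak{p}}\subseteq S_M(F,G)$, so Lemma~\ref{showdens} gives the limit $0$. For \eqref{newcond} the crucial observation is that the discriminant is translation-invariant: if $A$ represents a shifted $\mathfrak{p}$-Eisenstein polynomial $g$, then $g(x+b)$ is $\mathfrak{p}$-Eisenstein for some $b\in\ri$, hence $\operatorname{disc}(g)=\operatorname{disc}(g(x+b))\in\mathfrak{p}$ by the Sylvester-matrix argument of Corollary~\ref{cor:eisenstein} applied to $g(x+b)$, and $\operatorname{disc}(g)\neq 0$ since $g$ is irreducible; therefore $\operatorname{disc}(g)\in\bigcap_{\mathfrak{p}:\,A\in\widetilde{U}_{\mathfrak{p}}}\mathfrak{p}$. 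From this point the chain $H^{\alpha\lvert\{\mathfrak{p}\succeq H^\alpha:\,A\in\widetilde{U}_{\mathfrak{p}}\}\rvert}\leq\lvert N_{K/\mathbb{Q}}(\operatorname{disc}(g))\rvert\leq cH^{(2d-2)k}$ followed by taking a logarithm gives \eqref{newcond} for every $\alpha>0$, word for word as in Corollary~\ref{cor:eisenstein}.

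The genuinely new step is \eqref{newcond2}--\eqref{newcond3}. By Theorem~\ref{shiftedchar}, $\widetilde{U}_{\mathfrak{p}}\cap\ri^{d+1}=\bigsqcup_{b+\mathfrak{p}\in\ri/\mathfrak{p}}\sigma_b^{-1}(\Gamma_{\mathfrak{p}}^d)$, and since $\Gamma_{\mathfrak{p}}^d\subseteq\mathfrak{p}\times\mathfrak{p}^{d-1}\times\ri$ we get $\sigma_b^{-1}(\Gamma_{\mathfrak{p}}^d)\subseteq L_{b,\mathfrak{p}}:=\sigma_b^{-1}(\mathfrak{p}\times\mathfrak{p}^{d-1}\times\ri)$, which is the kernel of the $\ri$-linear surjection $\ri^{d+1}\to(\ri/\mathfrak{p})^d$ sending a polynomial to the classes mod $\mathfrak{p}$ of the $d$ lowest coefficients of its shift by $b$; thus $L_{b,\mathfrak{p}}$ depends only on $b\bmod\mathfrak{p}$ and is a full-rank sublattice of $\ri^{d+1}$ of index $N(\mathfrak{p})^d$. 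Consequently $\bigcap_{j=1}^n\widetilde{U}_{\mathfrak{p}_j}\cap O(H,\mathbb{E})^{d+1}$ is contained in a union of at most $\prod_j N(\mathfrak{p}_j)$ sets of the form $\bigcap_{j=1}^n L_{b_j,\mathfrak{p}_j}\cap O(H,\mathbb{E})^{d+1}$, each such intersection lattice having index $\prod_j N(\mathfrak{p}_j)^d$ for distinct $\mathfrak{p}_j$ (the relevant case, exactly as in Corollary~\ref{cor:eisenstein}) and, by the same Minkowski bound used there, a fundamental domain of its integral-basis image of diameter at most $\prod_j N(\mathfrak{p}_j)^d\leq\prod_j p_j^{dk}$. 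Choosing $\alpha=1/(ndk)$, the constraint $\mathfrak{p}_j\preceq c'H^\alpha$ keeps this diameter below a fixed multiple of $H$, so the lattice-point estimate behind \eqref{auxEisenstein} applies and yields $\lvert\bigcap_{j=1}^n\widetilde{U}_{\mathfrak{p}_j}\cap O(H,\mathbb{E})^{d+1}\rvert\leq C(2H)^{(d+1)k}\prod_{j=1}^n N(\mathfrak{p}_j)^{-(d-1)}$; one then takes $v_{\mathfrak{p}}=C/N(\mathfrak{p})^{d-1}$, and $\sum_{\mathfrak{p}\in\mathcal{P}_K}v_{\mathfrak{p}}$ converges precisely because $d\geq 3$.

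I expect the main obstacle to be exactly this diameter control: whereas in Corollary~\ref{cor:eisenstein} the set $U_{\mathfrak{p}}$ is a coset union of a single fixed sublattice, here $\widetilde{U}_{\mathfrak{p}}$ is a union of $N(\mathfrak{p})$ sheared copies, so one must bound the fundamental-domain diameters of the intersection lattices $\bigcap_j L_{b_j,\mathfrak{p}_j}$ purely polynomially in the $p_j$, with no dependence on $H$, in order to run the lattice-point count with an $H$-independent error term. This is what dictates the choice of $\alpha$ and, together with the coprimality requirement on $F$ and $G$ in \eqref{densitycond} and the summability of $C/N(\mathfrak{p})^{d-1}$, is the source of the hypothesis $d\geq 3$; note moreover that for $d=2$ the candidate value $\sum_{\mathfrak{p}\in\mathcal{P}_K}(N(\mathfrak{p})-1)^2/N(\mathfrak{p})^3$ of $\mu$ already diverges, as recalled in the proof of Corollary~\ref{Cor:densShifted}, so here the restriction $d\geq 3$ is genuinely necessary, in contrast to the purely density-theoretic statement.
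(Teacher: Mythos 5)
Your proposal is correct and its architecture coincides with the paper's: Condition \eqref{densitycond} is recycled from the proof of Corollary \ref{Cor:densShifted}, Condition \eqref{newcond} follows from the translation-invariance of the discriminant exactly as you say, and the only substantive work is in \eqref{newcond2}--\eqref{newcond3}, where $d\geq 3$ enters. The one place where you genuinely diverge from the paper is the handling of the union over shifts in \eqref{newcond2}. The paper first collapses $\bigcap_{j}\sigma_{b_j}^{-1}(\Gamma_{\mathfrak{p}_j}^d)$ into $\sigma_b^{-1}\left(\bigcap_{j}\Gamma_{\mathfrak{p}_j}^d\right)$ for a single $b$ obtained from the Chinese Remainder Theorem together with Proposition \ref{binp}; since a single $\sigma_b$ preserves $\ri^{d+1}$ and densities, the count then reduces directly to the unshifted estimate \eqref{auxEisenstein} of Corollary \ref{cor:eisenstein}, the factor $\prod_j N(\mathfrak{p}_j)$ coming only from the number of residues $b$. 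You instead enlarge each $\sigma_b^{-1}(\Gamma_\mathfrak{p}^d)$ to the kernel lattice $L_{b,\mathfrak{p}}=\sigma_b^{-1}(\mathfrak{p}\times\mathfrak{p}^{d-1}\times\ri)$ of index $N(\mathfrak{p})^d$ and count points of the intersection lattices $\bigcap_j L_{b_j,\mathfrak{p}_j}$ directly. This is valid and produces the same $v_\mathfrak{p}\asymp N(\mathfrak{p})^{-(d-1)}$, at the cost of (a) extending the paper's Minkowski diameter bound from ideals of $\ri$ to general full-rank sublattices of $\ri^{d+1}$ (harmless: a sublattice of $\mathbb{Z}^m$ has all successive minima at least $1$, so a reduced fundamental domain has diameter $O_m$ of the index), and (b) the smaller exponent $\alpha=1/(ndk)$ in place of $1/(nk)$, which is immaterial because \eqref{newcond} was verified for every $\alpha>0$. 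The paper's CRT collapse is marginally cleaner in that it avoids re-deriving the lattice-point estimate for a new family of lattices; your version is self-contained and equally correct. Your explicit argument that $I=\emptyset$ (via the nonvanishing discriminant lying in every relevant $\mathfrak{p}$) and your parenthetical restriction of \eqref{newcond2} to pairwise distinct $\mathfrak{p}_j$ both match the conventions the paper itself adopts in Corollary \ref{cor:eisenstein}.
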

\begin{proof}
     Recall that we already have verified \eqref{densitycond} in the proof of Corollary \ref{Cor:densShifted}.
    
     To show \eqref{newcond} we just note that 
     the discriminant of $f(x)$ is equal to the discriminant of $f(x+a)$
     for any $a\in \ri$. Hence, the same proof as in Corollary \ref{cor:eisenstein} works to verify \eqref{newcond} for any $\alpha>0$.
    
    In order to verify \eqref{newcond2} and \eqref{newcond3}, we use Theorem \ref{shiftedchar} to obtain
    \begin{align*}
        \bigcap_{j=1}^n \widetilde{U}_{\mathfrak{p}_j} \cap O(H, \mathbb{E})^{d+1}
        &= \bigcap_{j=1}^n \widetilde{\Gamma}_{\mathfrak{p}_j}^d \cap O(H, \mathbb{E})^{d+1} \\
        &= \bigcap_{j=1}^n \bigsqcup_{b_j + \mathfrak{p}_j \in \ri / \mathfrak{p}_j} \sigma_{b_j}^{-1} (\Gamma_{\mathfrak{p}_j}^d) \cap O(H, \mathbb{E})^{d+1} \\
        &= \bigcup_{b_1+\mathfrak{p}_1\in \ri/\mathfrak{p}_1, \dots, b_n+\mathfrak{p}_n \in \ri/\mathfrak{p}_n} \bigcap_{j=1}^n \sigma_{b_j}^{-1}(\Gamma_{\mathfrak{p}_j}^d) \cap O(H, \mathbb{E})^{d+1}.
    \end{align*}
    By the Chinese Remainder Theorem there exists $b\in \ri$ such that $b+ \mathfrak{p}_j = b_j + \mathfrak{p}_j$ for all $j\in \{1, \dots, n\}$ and thus, by Proposition \ref{binp}, we have $\sigma_{b_j}^{-1}(\Gamma_{\mathfrak{p}_j}^d) = \sigma_{b}^{-1}( \Gamma_{\mathfrak{p}_j}^d)$. This implies
    \begin{align*}
        \bigcap_{j=1}^n \sigma_{b_j}^{-1} (\Gamma_{\mathfrak{p}_j}^d) = \bigcap_{j=1}^n \sigma_b^{-1} (\Gamma_{\mathfrak{p}_j}^d) = \sigma_b^{-1} \left( \bigcap_{j=1}^n \Gamma_{\mathfrak{p}_j}^d \right).
    \end{align*}
    Combining the previous identities yields
    \begin{align*}
        \bigcap_{j=1}^n \widetilde{U}_{\mathfrak{p}_j} \cap O(H, \mathbb{E})^{d+1} 
        = \bigcup_{b + \bigcap_{j=1}^n \mathfrak{p}_j \in \ri / \bigcap_{j=1}^n \mathfrak{p}_j} \sigma_b^{-1} \left( \bigcap_{j=1}^n \Gamma_{\mathfrak{p}_j}^{d+1} \right) \cap O(H, \mathbb{E})^{d+1}.
    \end{align*}
    Hence, we choose $\alpha= 1/(nk)$ and denote by $\varphi: \ri \rightarrow \mathbb{Z}^k$ again the isomorphism induced by $\mathbb{E}$. By the same argument as in the proof of Corollary \ref{cor:eisenstein} we get that for $\mathfrak{p}_1, \dots, \mathfrak{p}_n \preceq H^\alpha$ we have $\operatorname{diam}(\varphi(\bigcap_{j=1}^n \mathfrak{p}_j)) \leq H$ and thus by the computation \eqref{auxEisenstein} we obtain
    \begin{align*}
        \frac{\left\vert \bigcap_{j=1}^n \widetilde{U}_{\mathfrak{p}_j} \cap O(H, \mathbb{E})^{d+1} \right\vert}{(2H)^{k(d+1)}}
        &\leq \left(\prod_{j=1}^n p_j^{\operatorname{deg}(\mathfrak{p}_j)} \right)  \frac{\left\vert \bigcap_{j=1}^n U_{\mathfrak{p}_j} \cap O(H, \mathbb{E})^{d+1} \right\vert}{(2H)^{k(d+1)}} \\
        &\leq \prod_{j=1}^n p_j^{-(d-1)\operatorname{deg}(\mathfrak{p}_j)}
        \leq \prod_{j=1}^n \frac{1}{p_j^2},
    \end{align*}
    where we used $d\geq 3$ to get the last inequality. Therefore, also \eqref{newcond2} and \eqref{newcond3} are verified. Thus, all the claims follow from Theorem \ref{Thm:highermoments}.
\end{proof}

\begin{remark}
    Note that Corollary \ref{cor:shiftEisenstein} does not extend to $d=2$. For $d=2$ the conditions of Theorem \ref{Thm:highermoments} are satisfied for no positive integer. In fact, the system does not even satisfy the weaker conditions of Theorem \ref{PoonenStollNumbfields} as $\sum_{\eta \in N_K} s_\eta $ diverges, as we saw in the proof of Corollary \ref{Cor:densShifted}.
\end{remark}

\section*{Acknowledgments}
The work of Giacomo Micheli is partially supported by the National Science Foundation grant number 2127742. The work of Severin Schraven is supported by NSERC of Canada.
The work of Simran Tinani is supported by armasuisse Science and Technology. The work of Violetta Weger is supported by the Swiss National Science Foundation grant number 195290.

\bibliographystyle{plain}
\bibliography{biblio}

\end{document}